\newtheorem{theorem}{Theorem}[section]
\newtheorem{lemma}[theorem]{Lemma}
\theoremstyle{definition}
\newtheorem{definition}[theorem]{Definition}
\newtheorem{example}[theorem]{Example}
\newtheorem{corollary}[theorem]{Corollary}
\theoremstyle{remark}
\newtheorem{remark}[theorem]{Remark}
\newcommand{\ndash}{\nobreakdash-\hspace{0pt}}
\begin{document}

\title{Tame Fr\'echet submanifolds}
\author{Walter Freyn \footnote{Fachbereich Mathematik, TU Darmstadt,
Schlossgartenstrasse 7,64289 Darmstadt, Germany, walter.freyn@math.tu-darmstadt.de}
}

\maketitle

\abstract{We introduce the new class of submanifolds of co-Banach type in tame Fr\'echet manifolds and construct tame Fr\'echet submanifolds as inverse images of regular values of certain tame maps. Our method furnishes an easy way to construct tame Fr\'echet manifolds. The results presented are key ingredients in the construction of affine Kac-Moody symmetric spaces; they have also important applications in the study of isoparametric submanifolds in tame Fr\'echet spaces.}

\section{Introduction}

In this paper we introduce and investigate a new class of submanifolds in tame Fr\'echet manifolds, which we call \emph{submanifolds of co-Banach type}. Our interest in this class of submanifolds arose from its natural emergence in Kac-Moody geometry~\cite{Freyn10a}.

The study of finite dimensional submanifolds in finite dimensional ambient spaces is a well-developed field displaying connections to manifold parts of mathematics and physics. In contrast, the theory of submanifolds in infinite dimensional ambient spaces is much less developed.  For infinite dimensional submanifolds the most important challenge is the appropriate choice of the functional analytic framework imposed on the submanifolds and their ambient spaces. While the study of certain classes of submanifolds in Hilbert spaces is a by now classical topic~\cite{Kuiper71, PalaisTerng88, Terng89, HeintzeGorodski12}, the investigation of submanifolds in ambient spaces which are neither Banach- nor Hilbert spaces has not gained much attention. Nevertheless,  as in important examples the functional analytic nature of the ambient space is determined by geometric or algebraic requirements, submanifolds in ambient spaces, which are not Banach spaces, occur naturally.  

The author's interest in this topic arose from constructions in affine Kac-Moody geometry. 
Thus let us sketch in some lines its setting:  Affine Kac-Moody geometry is the infinite dimensional differential geometry of affine Kac-Moody groups. Affine Kac-Moody groups can be thought of as the closest infinite dimensional generalization of semisimple Lie groups~\cite{Kac90, Moody95}; they can be described as certain torus extensions $\widehat{L}(G, \sigma)$ of (possibly twisted) loop groups $L(G, \sigma)$. Correspondingly affine Kac-Moody algebras are $2$\ndash dimensional extensions of (possibly twisted) loop algebras $L(\mathfrak{g}, \sigma)$. Here $G$ denotes a compact or complex simple Lie group, $\mathfrak{g}$ its Lie algebra and $\sigma\in \textrm{Aut}(G)$ a diagram automorphism, defining the ``twist''. Imposing suitable regularity assumptions on the loops, one obtains families of completions of the minimal (=algebraic) affine Kac-Moody groups.  This minimal algebraic loop group consists of Laurent polynomials. Following Jacques Tits, completions defined by imposing regularity conditions on the loops are called ``analytic completions'' in contrast to the more abstract formal completion. Various analytic completions and objects closely related to them play an important role in different branches of mathematics and physics, especially quantum field theory, integrable systems and differential geometry. In most instances their use is motivated by the requirement to employ functional analytic methods or by the need to work with manifolds and Lie groups \cite{PressleySegal86}, \cite{Guest97}, \cite{SegalWilson85}, \cite{Tsvelik95}, \cite{Popescu05}, \cite{PalaisTerng88}, \cite{Kobayashi11}, \cite{Khesin09}, \cite{HPTT}, \cite{HeintzeLiu}, \cite{Heintze06} and references therein. The study of functional analytic settings suitable for infinite dimensional differential geometry was performed for example in~\cite{Kriegl97, Glockner06, Neeb06, Neeb07, Glockner07}. See also~\cite{BertramGlocknerNeeb04, Bertram08, AnNeeb09}.

It turns out that for the definition of symmetric spaces associated to affine Kac-Moody groups, one is constrained to use the completion of affine Kac-Moody groups (resp. algebras) as groups (resp. algebras) of holomorphic maps of $\mathbb{C}^*$ into complex simple Lie groups (resp. Lie algebras)~\cite{Freyn09}. These spaces of holomorphic maps cannot be made into Hilbert or Banach spaces; nevertheless they carry a natural structure as Fr\'echet spaces. Unfortunately Fr\'echet spaces have in general unsatisfactory analytic properties: the main impediment to the development of a successful theory is the failure of an inverse function theorem. To circumvent this problem, various approaches were proposed: in most cases these approaches proceed by picking a distinguished subclass of Fr\'echet spaces, that show better properties and allow for the proof of an inverse function theorem. Following these lines of thought and abstracting arguments developed by John Nash in the proof of his famous embedding theorem~\cite{Nash56}, the class of  \emph{tame Fr\'echet spaces} was defined by Richard Hamilton in~\cite{Hamilton82}. For tame Fr\'echet spaces a convenient inverse function theorem exists. Some applications of tame Fr\'echet spaces to Lie groups and Lie algebras were investigated in~\cite{Payne89, Payne91}.

 The aim of this paper is to establish certain foundational results about tame Fr\'echet submanifolds of co-Banach type and the more restricted tame Fr\'echet submanifolds  co-finite type: The functional analytic key observation is, to establish that submanifolds of co-Banach type inherit a natural tame Fr\'echet structure from their ambient tame Fr\'echet manifolds resp. tame Fr\'echet spaces.  This result is used as a functional analytic key observation in a research program by the author, to construct Kac-Moody symmetric spaces and more generally to develop Kac-Moody geometry~\cite{Freyn10a, Freyn12d, Freyn12e}. It is also used in the theory of pseudo-Riemannian affine Kac-Moody symmetric spaces~\cite{Freyn12a}.   
Among other things we prove an implicit function theorem and show, that inverse images of regular values of certain maps are tame Fr\'echet submanifolds. This result is used for example in~\cite{Freyn12d} in the proof, that affine Kac-Moody groups are tame Fr\'echet manifolds. Remark that our notion of a co-Banach submanifold is stronger than other notions of submanifolds in the Fr\'echet setting, as we require the tangential space of the submanifold to be complemented by a Banach space.

Let us describe the content of the paper in a little more detail. 

Recall first the well-known finite dimensional blueprint. In this finite dimensional situation there are many ways known, to construct submanifolds $M^{k}$ in some finite dimensional vector space $V\cong \mathbb{R}^{n}$ such that $n>k$.  Furthermore any submanifold carries automatically the structure of an abstract manifold. One of the most commonly used criteria to establish, that a subset $M^{k}\subset V^n$ is a submanifold is the following: Let $f:\mathbb{R}^{n}\longrightarrow \mathbb{R}^{n-k}$ be a smooth map and $p\in \mathbb{R}^{n-k}$ a regular point (that is, a point $p$ such that for any point $y$ in the inverse image $y\in f^{-1}(p)$ the differential 
$df_{y}:T_{y}\mathbb{R}^{n}\longrightarrow T_{p}\mathbb{R}^{n-k}$ is surjective). Then an important theorem in differential topology states, that the inverse image 
$$\left\{y\in \mathbb{R}^{n}| f(y)=p \right\}$$
is a submanifold~\cite{BroeckerJaenich82}. A typical application is the proof that orthogonal groups $O(n)$ are manifolds, by viewing them as subsets in the set of $n\times n$-matrices and hence as a subset in some $n\times n$ dimensional vector space. Then one shows easily, that the identity matrix is a regular value of the map $A\mapsto AA^*$ whose inverse image is exactly the orthogonal group.

In this article we generalize this method to tame Fr\'echet spaces. To this end we need to provide new versions of all ingredients in this proof. Hence we need to 

\begin{itemize}
\item prove an implicit function theorem, adapted to our situation,
\item define a new notion of a \emph{regular value},
\item establish the tame Fr\'echet submanifold structure.
\end{itemize}

We start in section~\ref{sect:tame_spaces} to review basic ideas and definitions about tame Fr\'echet spaces as described in~\cite{Hamilton82}; then we establish a characterization of tame Fr\'echet space as subspaces of certain spaces of holomorphic maps. In section~\ref{sect_submanifolds_finite_type} we introduce the new notions of tame Fr\'echet manifolds of co-Banach and co-finite type. In section~\ref{tame_Frechet_manifold} we describe tame Fr\'echet manifolds. In section~\ref{sect:implicit_function_theorem} we prove our version of the implicit function theorem, which is based on the Nash-Moser inverse function theorem. In the final section~\ref{sect:construction_submanifolds} we use this result, to construct some tame Fr\'echet submanifolds of co-finite type. This paper is part of a research program by the author devoted to present a complete theory of affine Kac-Moody symmetric spaces. The results of this paper form a part of the technical core, to provide the functional analytic basics for the theory. A special case of the main result of this paper was established in the author's thesis~\cite{Freyn09}.

My special thanks go to the unknown referee for numerous remarks and comments, which led to important improvements of the article.

\section{Tame Fr\'echet spaces}
\label{sect:tame_spaces}
\subsection{Fr\'echet spaces}

\noindent This introductory section collects some standard results about Fr\'echet spaces, Fr\'echet manifolds and Fr\'echet Lie groups. Further details or omitted proofs can be found in Hamilton's article~\cite{Hamilton82}.

\begin{definition}[Fr\'echet space]
A Fr\'echet vector space is a locally convex topological vector space which is complete, Hausdorff and metrizable.
\end{definition}

\begin{lemma}[Metrizable topology]
A vector topology on a vector space is metrizable iff it can be defined by a countable collection of seminorms.
\end{lemma}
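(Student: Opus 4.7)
The plan is to prove both directions separately, with the caveat that the ``metrizable implies seminorms'' direction genuinely needs the tacit local convexity assumption that is in force throughout the section (the lemma is being stated in the context of Fr\'echet spaces).

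For the direction ``countable seminorms $\Rightarrow$ metrizable'', I would write down the classical translation-invariant metric. Given a countable separating family $\{p_n\}_{n\in\mathbb{N}}$ of seminorms, define
\[
d(x,y) \;=\; \sum_{n=1}^{\infty} 2^{-n}\,\frac{p_n(x-y)}{1+p_n(x-y)}.
\]
Convergence is immediate from $t/(1+t)\le 1$. Symmetry is obvious; the triangle inequality reduces, summand by summand, to the fact that $t\mapsto t/(1+t)$ is nondecreasing and subadditive on $[0,\infty)$, i.e.\ $\tfrac{a+b}{1+a+b}\le \tfrac{a}{1+a}+\tfrac{b}{1+b}$. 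The seminorm family being separating forces $d(x,y)=0\Rightarrow x=y$. To identify the topologies, one checks that a basic seminorm neighborhood $\{x:p_{n_1}(x)<\varepsilon,\dots,p_{n_k}(x)<\varepsilon\}$ contains a $d$-ball, and conversely, since $p_n(x)\to 0$ for all $n$ whenever $d(x,0)\to 0$, every $d$-ball contains a finite seminorm intersection. Translation invariance of $d$ comes for free from the translation invariance of each $p_n$.

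For the converse direction, assume the topology is metrizable (and locally convex). Then $0$ has a countable neighborhood basis $\{V_n\}_{n\in\mathbb{N}}$, and by local convexity each $V_n$ may be replaced by a convex balanced open neighborhood $U_n\subset V_n$; shrinking further we may also arrange $U_{n+1}\subset U_n$. For each $U_n$ take the Minkowski functional
\[
p_n(x) \;=\; \inf\{\,t>0 : x\in tU_n\,\},
\]
which is a seminorm because $U_n$ is convex, balanced, and absorbing (the latter from being a $0$-neighborhood in a topological vector space). The family $\{p_n\}$ is countable by construction, and the equality $\{x:p_n(x)<1\}=U_n$ (interior of the Minkowski ball) together with the fact that $\{U_n\}$ is a neighborhood basis shows that the seminorm topology coincides with the original one. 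Hausdorffness of the original topology makes $\{p_n\}$ separating.

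The main obstacle, and the reason this is really a lemma rather than a one-liner, is the interplay between the two descriptions of the neighborhood basis at $0$: one must show that the countable intersection-of-seminorm-balls description and the countable metric-ball description refine each other. The subadditivity estimate for $t/(1+t)$ in the first direction, and the passage from an arbitrary countable $0$-basis to one consisting of convex balanced sets in the second direction, are the two technical points that carry the argument; everything else is bookkeeping.
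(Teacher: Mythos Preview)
The paper does not actually prove this lemma: it is stated as a standard background fact in the introductory subsection on Fr\'echet spaces, with the blanket remark that ``further details or omitted proofs can be found in Hamilton's article~\cite{Hamilton82}.'' There is therefore no in-paper argument to compare against.

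Your proof is the standard textbook argument and is correct. Two remarks. First, your caveat about needing local convexity for the converse direction is genuine and necessary: without it the statement is false (the spaces $L^p$ for $0<p<1$ carry a metrizable vector topology that is not locally convex and hence not given by seminorms), and the lemma as literally stated only holds under the ambient Fr\'echet-space assumptions of the section. Second, in the forward direction you tacitly use that the family of seminorms is separating in order to get $d(x,y)=0\Rightarrow x=y$; this is again supplied by the ambient Hausdorff hypothesis. Since the paper simply defers to Hamilton, your write-up in fact supplies more than the paper does.
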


\noindent Let us give some typical examples:

\begin{example}[Fr\'echet spaces]~
\label{frechetexamples}
\begin{enumerate}
\item Every Banach space is a Fr\'echet space. The countable collection of norms contains just one element.
\item Let $\textrm{Hol}(\mathbb{C}, \mathbb{C})$ denote the space of holomorphic functions $f: \mathbb{C} \longrightarrow \mathbb{C}$. Let $K_n$ be a collection of simply connected compact sets in $\mathbb C$, such that $\bigcup K_n=\mathbb{C}$. Let $\|f\|_n:= \displaystyle\sup_{z\in K_n} |f(z)|$. Then $\textrm{Hol}(\mathbb {C}, \mathbb {C}; \|\hspace{3pt}\|_n)$ is a Fr\'echet space. 
\end{enumerate}
\end{example}
 
\begin{definition}
 A Fr\'echet manifold is a (possibly infinite dimensional) manifold with charts in a Fr\'echet space such that the chart transition functions are smooth. 
\end{definition}

While it is possible to define Fr\'echet manifolds in this way, there are three strong impediments to the development of analysis and geometry of those spaces:
\begin{enumerate}
\item In general there is no inverse function theorem for smooth maps between Fr\'echet spaces. For counterexamples and examples pointing out features special to Fr\'echet spaces, see~\cite{Hamilton82}.
\item In general the dual space of a Fr\'echet space is not a Fr\'echet space. 
\item there is no usefull topological structure on the general linear group of a Fr\'echet space known~\cite{Dodson12, DodsonGalanis04}.
\end{enumerate}

\noindent To tackle these problems successfully is difficult:  
\begin{example}
We take as example the space of holomorphic functions $\textrm{Hol}(\mathbb{C}, \mathbb{C})$. The domain $\mathbb{C}$ can be interpreted as a direct limit of the sets $K_n:=B(0;n)$ of balls of radius $n$ around $0$ with respect to inclusion;  following the construction in example~\ref{frechetexamples}, example 2, we can define a sequence of norms $\|f\|_{n,n\in \mathbb{N}}$ on $\textrm{Hol}(\mathbb{C}, \mathbb{C})$ by $\|f\|_n:=\sup_{z\in K_n}|f(z)|$. The space of functions on a direct limit is an inverse limit; $\textrm{Hol}(\mathbb{C}, \mathbb{C})$ coincides thus with the inverse limit of a sequence of function spaces $\textrm{Hol}(K_n, \mathbb{C})$, where 
\begin{displaymath}\textrm{Hol}(K_n, \mathbb{C}):=\bigcup_{K_n\subset U_{n}^m}\{\textrm{Hol}(U_{n}^m, \mathbb{C})\}\, ,\end{displaymath}

and $U_n^m$ runs through all open sets containing $K_n$.
 By a choice of appropriate norms on the spaces $\textrm{Hol}(K_n, \mathbb{C})$, one can give them structures as subspaces of Bergmann-(or Hardy-) spaces. See for example~\cite{HKZ00} and \cite{Duren00}. Hence $\textrm{Hol}(\mathbb{C}, \mathbb{C})$ can be interpreted as inverse limit of Hilbert spaces. By categorical duality, the dual space of an inverse limit is a direct limit and vice versa. Hence the dual space of $\textrm{Hol}(\mathbb{C}, \mathbb{C})$ is as a dual space not a Fr\'echet space. See also~\cite{Zarnadze92} and references therein.
 \end{example} 
 
Similar arguments hold for reflexive Fr\'echet spaces. A detailed study may be found in~\cite{Schaefer80, Kriegl97}.

The solution to the first problem is based on a more refined control of the inverse limits. This is achieved by defining carefully chosen comparison conditions between the various seminorms. Using this structure, the inverse function theorems on the Hilbert-(resp.\ Banach-)spaces in the inverse limit sequence piece together to give an inverse function theorem on the limit space; this is the famous Nash-Moser inverse function theorem. In the next sections we will formalize those concepts. 

To bypass the third obstacle, we use in this paper only general linear groups of Banach spaces. This is where the condition of complementary Banach spaces in the definition of co-Banach submanifolds in definition~\ref{TamesubmanifoldofBanachtype}  is necessary.

We have to note that there are other ways proposed of getting control over Fr\'echet spaces. A recent example is the concept of bounded Fr\'echet geometry developed by Olaf M\"uller~\cite{Muller06}.

\subsection{Tame Fr\'echet spaces}

A central challenge in the development of any advanced structure theory of Fr\'echet spaces is a better control of the set of seminorms. For Fr\'echet spaces $F$ and $G$ and a map $\varphi: F\longrightarrow G$ this will be done by imposing estimates similar in spirit to the concept of quasi isometries relating the sequences of norms $\|\varphi(f)\|_n$ and $\|f\|_m$. Following~\cite{Hamilton82} \emph{tame Fr\'echet spaces} are defined as Fr\'echet spaces that are \emph{tame equivalent} to some model space of holomorphic functions.

The prerequisite for estimating norms under maps between Fr\'echet spaces are estimates of the norms on the Fr\'echet space itself. This is done by a grading:

\begin{definition}[grading]
Let $F$ be a Fr\'echet space. A grading on $F$ is a sequence of seminorms $\{\|\hspace{3pt}\|_{n}, n\in \mathbb N_0\}$ that define the topology and satisfy
$$\|f \|_0\leq \|f\|_1 \leq \|f\|_2 \leq\| f \|_3 \leq \dots \,.$$
\end{definition}

\begin{lemma}[Constructions of graded Fr\'echet spaces]~
\begin{enumerate}
	\item A closed subspace of a graded Fr\'echet space is a graded Fr\'echet space.
	\item Direct sums of graded Fr\'echet spaces are graded Fr\'echet spaces.
\end{enumerate}
\end{lemma}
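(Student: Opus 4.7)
For part (1), let $H\subset F$ be a closed subspace of a graded Fr\'echet space $(F,\{\|\cdot\|_n\})$. The plan is to simply restrict every seminorm to $H$, i.e.\ put $\|h\|_n^H:=\|h\|_n$ for $h\in H$. Monotonicity $\|h\|_0^H\le \|h\|_1^H\le\cdots$ is inherited tautologically. I would then invoke the standard fact that a closed subspace of a complete metrizable t.v.s.\ is again complete metrizable, so $H$ is itself a Fr\'echet space. What remains is to observe that the restricted seminorms define exactly the subspace topology of $H$: this follows because the translation-invariant metric on $F$ built from the $\|\cdot\|_n$ (for instance $d(f,g)=\sum 2^{-n}\|f-g\|_n/(1+\|f-g\|_n)$) restricts on the nose to the corresponding metric built from the $\|\cdot\|_n^H$. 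Hence $\{\|\cdot\|_n^H\}$ is a grading on $H$.

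For part (2), given graded Fr\'echet spaces $(F,\{\|\cdot\|_n^F\})$ and $(G,\{\|\cdot\|_n^G\})$, I would define the grading on $F\oplus G$ by
\[
\|(f,g)\|_n := \|f\|_n^F+\|g\|_n^G\qquad(n\in\mathbb N_0).
\]
(Equivalently one could use the maximum; the two choices are Lipschitz equivalent and give the same topology.) Monotonicity in $n$ follows immediately from the componentwise monotonicity. Each $\|\cdot\|_n$ is a seminorm by linearity of the definition, and the family is separating because $\{\|\cdot\|_n^F\}$ and $\{\|\cdot\|_n^G\}$ are. I would then check that the topology generated is exactly the product topology on $F\times G$: a basis of zero-neighborhoods at stage $n$ of the form $\{(f,g):\|(f,g)\|_n<\varepsilon\}$ is squeezed between the two product-type neighborhoods $\{\|f\|_n^F<\varepsilon/2\}\times\{\|g\|_n^G<\varepsilon/2\}$ and $\{\|f\|_n^F<\varepsilon\}\times\{\|g\|_n^G<\varepsilon\}$. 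Since a finite product of complete metrizable locally convex spaces is complete metrizable locally convex, $F\oplus G$ with this grading is a graded Fr\'echet space.

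Neither part poses a genuine obstacle; both are essentially bookkeeping on seminorms. The only point I would flag is the scope of part (2): with the grading above, the argument extends verbatim to any \emph{finite} direct sum, but for countably infinite direct sums the algebraic direct sum carries the box topology which is not metrizable in general, so one must interpret ``direct sum'' here as the product (countable products of Fr\'echet spaces are again Fr\'echet, and one can still grade them diagonally, e.g.\ $\|(f_k)_k\|_n:=\sum_{k\le n}\|f_k\|_n^{F_k}$). I would add a short remark to that effect so that the statement is unambiguous when it is later invoked.
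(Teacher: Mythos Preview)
Your proof is correct; both parts are indeed straightforward bookkeeping with seminorms, and your arguments for the subspace topology and the product topology are clean. Note that the paper does not actually supply a proof of this lemma---it is stated without proof (presumably deferred to Hamilton~\cite{Hamilton82}), so there is nothing to compare against. Your remark on the scope of part~(2) is a reasonable caveat, though the paper only ever invokes finite sums and products (cf.\ Lemma~\ref{constructionoftamespaces} and Lemma~\ref{constructionoftamemaps}, with the explicit warning after the latter that the conclusion fails for infinite products), so ``direct sum'' here is safely read as finite.
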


\noindent EAny Fr\'echet space admits a grading. Let  $(F, \|\hspace{3pt}\|_{n, n\in \mathbb{N}})$ be a Fr\'echet space. Then 
\begin{displaymath}
\left(\widetilde{F}, \widetilde{\|\hspace{3pt}\|}_{n, n\in \mathbb{N}}\right)
\end{displaymath} 
such that $\widetilde{F}:=F$ as a vector space and $\widetilde{\|\hspace{3pt} \|}_{n}:=\displaystyle\sum_{i=1}^n \|\hspace{3pt}\|_i$ is a graded Fr\'echet space. As Fr\'echet spaces $F$ and $\widetilde{F}$ are isomorphic. In consequence the existence of a grading is not a property a Fr\'echet space may satisfy or may not satisfy. A grading is an additional structure, defined on the Fr\'echet space, which is of geometric nature. A useful notion of equivalence on the space of gradings of a given Fr\'echet space is the one of \emph{tame equivalence}:

\begin{definition}[Tame equivalence of gradings]
Let $F$ be a graded Fr\'echet space, $r,b \in \mathbb{N}$ and $C(n), n\in \mathbb{N}$ a sequence with values in $\mathbb{R}^+$. The two gradings  $\{\|\hspace{3pt}\|_n\}$ and $\{\widetilde{\|\hspace{3pt}\|}_n\}$ are called $\left(r,b,C(n)\right)$-equivalent iff 
\begin{equation*}
 \|f\|_n \leq C(n) \widetilde{\|f\|}_{n+r} \text{ and }  \widetilde{\|f\|}_n \leq C(n)\|f\|_{n+r} \text{ for all } n\geq b
\,.
\end{equation*}
They are called tame equivalent iff they are $(r,b,C(n))$-equivalent for some choice $(r,b,C(n))$.
\end{definition}

Remark that in general on a Fr\'echet space, there are gradings which are not tame equivalent. Hence the tame structure is finer than the topological structure.

\noindent The following example is basic:

\begin{example}
\label{example:two equivalent norms}
Let $B$ be a Banach space with norm $\| \hspace{3pt} \|_B$. Recall that a sequence $(f_k)_{k\in \mathbb{N}}$ of elements of $B$ is called exponentially decreasing iff $\sum_k \|f_k\|e^{kn}<\infty$ for all $n\in \mathbb{N}$. Denote by $\Sigma(B)$ the space of all exponentially decreasing sequences $(f_k)_{k\in \mathbb{N}}$ on $B$.
On this space the following sequences of norms define gradings:

\begin{align}
\|f\|_{l_1^n} &:= \sum_{k=0}^{\infty}e^{nk} \|f_k\|_B\,,\\
\|f\|_{l_{\infty}^n}&:= \sup_{k\in \mathbb N_0} e^{nk}\|f_k\|_B\,.
\end{align}
\end{example}

\begin{lemma}
\label{lemma:two equivalent norms}

On the space $\Sigma(B)$ the two gradings $\|f\|_{l_1^n}$ and $\|f\|_{l_{\infty}^n}$ are tame equivalent.
\end{lemma}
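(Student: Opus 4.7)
My plan is to verify tame equivalence directly by producing explicit constants $r,b,C(n)$. One direction is trivial and the other reduces to pulling out a geometric factor.

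First I would observe that the $\ell^\infty$ grading is always dominated by the $\ell^1$ grading: since all summands $e^{nk}\|f_k\|_B$ are nonnegative, the supremum is bounded by the sum, giving
\begin{equation*}
\|f\|_{l_\infty^n} \;\leq\; \|f\|_{l_1^n}
\end{equation*}
for every $n$. This yields the first half of tame equivalence with parameters $r=0$ and constant $1$.

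For the reverse estimate, the key trick is to shift the index by one and absorb the weight $e^k$ into a convergent geometric series. Writing
\begin{equation*}
\|f\|_{l_1^n} \;=\; \sum_{k=0}^{\infty} e^{-k}\cdot e^{(n+1)k}\|f_k\|_B \;\leq\; \Bigl(\sum_{k=0}^{\infty} e^{-k}\Bigr)\sup_{k\in\mathbb{N}_0} e^{(n+1)k}\|f_k\|_B \;=\; \frac{e}{e-1}\,\|f\|_{l_\infty^{n+1}},
\end{equation*}
I obtain the other half with $r=1$ and $C(n)=\tfrac{e}{e-1}$ (a constant sequence), valid for all $n\geq 0$. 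Combining the two estimates shows that the two gradings are $(1,0,C)$-equivalent in the sense of the definition, hence tame equivalent.

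I do not expect any serious obstacle here; the whole lemma is essentially the classical comparison between weighted $\ell^1$ and $\ell^\infty$ norms, and the exponential decay built into the definition of $\Sigma(B)$ is exactly what makes the geometric series $\sum e^{-k}$ summable. The only mild subtlety is bookkeeping: one must check that $\|f\|_{l_\infty^{n+1}}$ really is finite for $f\in\Sigma(B)$, which is immediate from $\sup_k e^{(n+1)k}\|f_k\|_B\leq \sum_k e^{(n+1)k}\|f_k\|_B<\infty$ by the very definition of exponentially decreasing sequences.
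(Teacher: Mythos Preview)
Your argument is correct and is precisely the standard proof; the paper itself does not give a proof but simply refers to Hamilton~\cite{Hamilton82}, where the same geometric-series comparison is used. There is nothing to add.
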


\noindent For the proof see~\cite{Hamilton82}.

\begin{example}
The space of exponentially decreasing sequences of elements in $B=\mathbb C^2$ equipped with the Euclidean norm and the space of exponentially decreasing sequences of elements in $B=\mathbb C^2$ together with the supremum-norm $\|(c_1, c_1')\|_B :=\sup(|c_1|,|c_1'|)$ are tame Fr\'echet spaces.
\end{example}

Let us give an intuitive characterization of tame spaces which seems not be in the literature:

\begin{lemma}
\label{lemma:sigma_B_is_holomorphic}
Let $B$ be a complex Banach space. The space $\Sigma(B)$ of exponentially decreasing sequences in $B$ is isomorphic to the space $\textrm{Hol}(\mathbb{C},B)$ of $B$\ndash valued holomorphic functions.
\end{lemma}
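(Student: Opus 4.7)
The plan is to construct the isomorphism via the Taylor series correspondence, sending an exponentially decreasing sequence to the entire function whose coefficients it represents, and show both directions are tame continuous.

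First, I would define the map $\Phi: \Sigma(B) \to \textrm{Hol}(\mathbb{C},B)$ by $\Phi((f_k)_{k\in\mathbb{N}})(z) := \sum_{k=0}^{\infty} f_k z^k$. Well-definedness: fix any $z \in \mathbb{C}$ and choose $n$ with $|z| \leq e^{n}$; then $\sum_k \|f_k\|_B |z|^k \leq \sum_k \|f_k\|_B e^{nk} = \|(f_k)\|_{l_1^n} < \infty$ by hypothesis. Hence the series converges absolutely and uniformly on every compact subset of $\mathbb{C}$ in the Banach-space sense, yielding an entire $B$-valued function. For the inverse $\Psi: \textrm{Hol}(\mathbb{C},B) \to \Sigma(B)$, I would use the Cauchy coefficients
\begin{equation*}
\Psi(F)_k := \frac{1}{2\pi i}\oint_{|z|=r} \frac{F(z)}{z^{k+1}}\,dz,
\end{equation*}
defined as a Bochner integral (the integrand takes values in the Banach space $B$, so this makes sense and is independent of $r>0$ by Cauchy's theorem for vector-valued holomorphic maps).

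Next I would verify that $\Psi(F) \in \Sigma(B)$. Set $M(r) := \sup_{|z|=r}\|F(z)\|_B$. The standard Cauchy estimate extends to Banach-valued holomorphic functions and gives $\|\Psi(F)_k\|_B \leq M(r)/r^k$. Choosing $r = e^{n+1}$ yields $e^{nk}\|\Psi(F)_k\|_B \leq M(e^{n+1}) e^{-k}$, so $\sum_k e^{nk}\|\Psi(F)_k\|_B < \infty$ for every $n$, establishing exponential decay. The identities $\Phi\circ\Psi = \Id$ and $\Psi\circ\Phi = \Id$ are then the standard statement that every entire $B$-valued function is the sum of its Taylor series and conversely; this follows in the vector-valued setting by reducing to the scalar case via continuous linear functionals (Hahn–Banach) or by a direct uniform-convergence argument.

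Finally, to obtain an isomorphism of \emph{graded} (indeed tame) Fr\'echet spaces, I would endow $\textrm{Hol}(\mathbb{C},B)$ with the natural grading $\|F\|_n := \sup_{|z|\leq e^{n}}\|F(z)\|_B$ and compare it to the grading $\|\cdot\|_{l_\infty^n}$ on $\Sigma(B)$ (the two gradings on $\Sigma(B)$ are tame equivalent by Lemma~\ref{lemma:two equivalent norms}). The Cauchy estimate above directly gives
\begin{equation*}
\|\Psi(F)\|_{l_\infty^n} = \sup_k e^{nk}\|\Psi(F)_k\|_B \leq \|F\|_n,
\end{equation*}
while for the other direction,
\begin{equation*}
\|\Phi((f_k))\|_n \leq \sum_k e^{nk}\|f_k\|_B \leq \Bigl(\sum_k e^{-k}\Bigr)\sup_k e^{(n+1)k}\|f_k\|_B = \frac{1}{1-e^{-1}}\|(f_k)\|_{l_\infty^{n+1}},
\end{equation*}
showing that $\Phi$ and $\Psi$ are mutually inverse tame linear isomorphisms with tame-equivalence constants $(r,b,C(n)) = (1,0,\tfrac{1}{1-e^{-1}})$.

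The only mildly delicate point is handling holomorphy in the Banach-valued setting: the Cauchy integral formula, Cauchy estimates, and Taylor expansion must be used in the vector-valued form. These are standard but should be invoked carefully, either by appealing to the theory of Banach-valued holomorphic functions directly, or by composing with arbitrary $\ell \in B^*$ to reduce each identity to the classical scalar case and then recovering the Banach-valued statement via Hahn–Banach. Everything else is a routine term-by-term estimate.
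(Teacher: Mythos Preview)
Your proof is correct and follows essentially the same route as the paper: both identify $\Sigma(B)$ with $\textrm{Hol}(\mathbb{C},B)$ via the Taylor coefficient correspondence, using the Cauchy estimate to pass from entire functions to exponentially decreasing sequences and the trivial bound $|z|^k\leq e^{nk}$ on $B(0,e^n)$ for the converse. Your version is more complete than the paper's, which only sketches the two inclusions; you explicitly exhibit the mutual inverses and verify the tame estimates with concrete constants, which the paper does not do.
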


\begin{proof}
We prove the inclusions $\Sigma(B)\subset \textrm{Hol}(\mathbb{C},B)$ and $\textrm{Hol}(\mathbb{C},B)\subset \Sigma(B)$:
Let first $f\in \textrm{Hol}(\mathbb{C},B)$ with a series expansion:
\begin{displaymath}
f(z)=\sum_{k=0}^{\infty}f_k z^k.
\end{displaymath} 
The coefficients $f_k$ are elements of $B$ and we have to show that the sequence $(f_k)$ is an exponentially decreasing sequence. In view of lemma~\ref{lemma:two equivalent norms} we have to establish, that the
 $l_{\infty}$\ndash norm is bounded.
To deduce some estimate for $f_k$ we need three ingredients:
\begin{enumerate}
 \item As $f(z)$ is an entire function the Taylor series expansion $f(z)=\sum_{n=0}^{\infty}f_k z^k$ converges for all $z\in \mathbb{C}$ absolutely - hence 
 $$\sup_{z\in B(0,e^n)}|f(z)|<\infty\qquad \textrm{for all} \quad n\,.$$
\item Differentiation of $f$ yields the identity $f_k=\frac{1}{k!}f^{(k)}(0)$.
\item The Cauchy inequality~\cite{Berenstein91}, 2.1.20: Let $f$ be holomorphic on $B(0,r)$. Then
\begin{displaymath}
\left|f^{(k)}(0)\right|\leq k!\frac{\sup_{z\in B(0,r)}|f(z)|}{r^k}\,.
\end{displaymath}
\end{enumerate}
Using these ingredients we get for $n\in \mathbb{N}$

\begin{align*}
 \sup_{k}|f_k|e^{nk}&=\sup_{k}\left|\frac{f^{(k)}(0)}{k!} \right|e^{kn}\leq\\
&\leq\sup_k\frac{e^{kn}}{k!}\left|k!\frac{\sup_{z\in B(0,e^n)}|f(z)|}{e^{nk}}\right|=\\
&=\sup_{z\in B(0,e^n)}|f(z)|\leq \infty\,.
\end{align*}
Hence 
\begin{displaymath}
 \|f\|_{l_{\infty}^{n}}<\infty\, .
\end{displaymath}

Conversely for any exponentially decreasing sequence $(f_n)_{n\in \mathbb{N}}$, $f_n\in B$ we claim, that 
\begin{displaymath}
f(z):=\sum_{n=0}^{\infty}f_n z^n\, .
\end{displaymath} 
defines a $B$-valued entire holomorphic function. To see this we use for $U\subset B(0,e^n)$  the estimate
\begin{displaymath}
 |f(z)|=\left|\sum_k f_k z^k\right|\leq \sum_k |f_k||z^k|\leq \sum |f_k|e^{kn}<\infty\quad\textrm{for}\ z\in U\,.
\end{displaymath}
\end{proof}

\begin{corollary}
\label{cor:sigma_B_is_holomorphic}
Let $B$ be a real Banach space and $B_{\mathbb{C}}$ its complexification. Then $\Sigma(B)$ consists of all $B_{\mathbb{C}}$\ndash valued holomorphic functions on $\mathbb{C}$ such that 
\begin{displaymath}
 \overline{f(z)}=f(\overline{z})\, .
\end{displaymath}
  
\end{corollary}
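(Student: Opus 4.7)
The plan is to deduce this corollary from the preceding lemma applied to the complex Banach space $B_{\mathbb{C}}$. The lemma gives $\Sigma(B_{\mathbb{C}}) \cong \textrm{Hol}(\mathbb{C}, B_{\mathbb{C}})$ via the natural identification between exponentially decreasing coefficient sequences and their associated entire series $f(z) = \sum_{k=0}^{\infty} f_k z^k$. What remains is to identify, under this isomorphism, the subset $\Sigma(B) \subset \Sigma(B_{\mathbb{C}})$ (sequences with coefficients in the real form $B$) with the subset of $\textrm{Hol}(\mathbb{C}, B_{\mathbb{C}})$ cut out by the reality condition $\overline{f(z)} = f(\overline{z})$. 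Here the bar on the left denotes the conjugation on $B_{\mathbb{C}}$ whose fixed-point set is $B$.

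First I would note that an element $(f_k) \in \Sigma(B_{\mathbb{C}})$ lies in $\Sigma(B)$ precisely when $\overline{f_k} = f_k$ for every $k$. Then I would verify the two directions. For the forward direction, assume each $f_k \in B$; then since conjugation on $B_{\mathbb{C}}$ is continuous and anti-linear, for all $z \in \mathbb{C}$
\begin{equation*}
\overline{f(z)} = \overline{\sum_{k=0}^{\infty} f_k z^k} = \sum_{k=0}^{\infty} \overline{f_k}\,\overline{z}^{\,k} = \sum_{k=0}^{\infty} f_k \overline{z}^{\,k} = f(\overline{z}).
\end{equation*}
The exchange of conjugation and summation is justified by the absolute convergence established in the proof of Lemma~\ref{lemma:sigma_B_is_holomorphic}.

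For the converse, assume $\overline{f(z)} = f(\overline{z})$ for all $z \in \mathbb{C}$. Applying the same manipulation gives $\sum_k \overline{f_k} z^k = \sum_k f_k z^k$ as entire $B_{\mathbb{C}}$-valued functions (after substituting $\overline{z}$ for $z$ on one side and re-conjugating). By uniqueness of Taylor coefficients of an entire function one concludes $\overline{f_k} = f_k$ for all $k$, hence $f_k \in B$ and $(f_k) \in \Sigma(B)$. Combining both directions establishes the claimed characterization.

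I expect no real obstacle: the only minor subtlety is keeping the two meanings of "conjugation" straight — the conjugation on $\mathbb{C}$ acting on the variable $z$, and the anti-linear involution on $B_{\mathbb{C}}$ fixing $B$ — and making sure interchanges of limits with the continuous, anti-linear conjugation are justified by the absolute convergence already in hand.
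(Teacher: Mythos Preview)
Your argument is correct and is exactly the natural deduction the paper has in mind: the paper states this result as an immediate corollary of Lemma~\ref{lemma:sigma_B_is_holomorphic} and supplies no separate proof, so there is nothing further to compare. Your handling of the two conjugations and the use of uniqueness of Taylor coefficients is the standard and expected route.
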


\begin{example}
 The prototypical example is the choice $B:=\mathbb{C}$. Then we find
\begin{displaymath}
 \Sigma(\mathbb{C}):=\left\{(a_k)_{k\in \mathbb{N}_0}\in \mathbb{C}\mid\sum_k |a_k|e^{kn}<\infty\quad \forall n\in \mathbb{N}\right\}\,.
\end{displaymath}
 Hence $\Sigma(\mathbb{C})=\textrm{Hol}(\mathbb{C},\mathbb{C})$.
\end{example}

Let $F$, $G$, $G_1$ and $G_2$  denote graded Fr\'echet spaces.

\begin{definition}[Tame linear map]
A linear map $\varphi: F\longrightarrow G$ is called $(r,b,C(n))$-tame if it satisfies the inequality
$$\|\varphi(f)\|_n \leq C(n)\|f\|_{n+r}\, \quad \textrm{for all }n\geq b.$$
$\varphi$ is called tame iff it is $(r,b,C(n))$-tame for some $(r,b, C(n))$.
\end{definition}

Remark that tameness of a map does depend on the choice of the set of norms. Two different sets of norms generating the same topology may define different tame structures~\cite{Hamilton82}.

\begin{definition}[Tame isomorphism]
A map $\varphi:F\longrightarrow G$ is called a tame isomorphism iff it is a linear isomorphism and $\varphi$ and $\varphi^{-1}$ are tame maps.
\end{definition}

\begin{definition}[Tame direct summand]
$F$ is a tame direct summand of $G$ iff there exist tame linear maps $\varphi: F\longrightarrow G$ and $\psi: G \longrightarrow F$ such that $\psi \circ \varphi: F \longrightarrow F$ is the identity.
\end{definition}

Hence tame direct summands are defined by one-sided inverses. A \emph{tame Fr\'echet space} is now defined as a tame direct summand of a model space.

\begin{definition}[Tame Fr\'echet space]
$F$ is a tame Fr\'echet space iff there is a Banach space $B$ such that $F$ is a tame direct summand of $\Sigma(B)$.
\end{definition}

In view of \ref{lemma:sigma_B_is_holomorphic} and corollary \ref{cor:sigma_B_is_holomorphic} tame Fr\'echet spaces are essentially spaces of holomorphic maps. Let us remark the following result of~\cite{Hamilton82}:

\begin{lemma}[Constructions of tame Fr\'echet spaces]~
\label{constructionoftamespaces}
\begin{enumerate}
	\item A tame direct summand of a tame Fr\'echet space is tame.
	\item A Cartesian product of two tame Fr\'echet spaces is tame.
\end{enumerate}
\end{lemma}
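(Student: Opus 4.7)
The plan is to handle the two assertions separately, both by reducing them to manipulations of tame linear maps of the form guaranteed by the definition of a tame Fr\'echet space. The one auxiliary fact I will use throughout is that the composition of tame linear maps is tame: if $\varphi\colon F\to G$ is $(r_1,b_1,C_1(n))$\ndash tame and $\psi\colon G\to H$ is $(r_2,b_2,C_2(n))$\ndash tame, then the estimates chain to give
\begin{equation*}
\|\psi\varphi(f)\|_n \leq C_2(n)\|\varphi(f)\|_{n+r_2} \leq C_2(n)C_1(n+r_2)\|f\|_{n+r_1+r_2}
\end{equation*}
for all sufficiently large $n$, so $\psi\circ\varphi$ is $(r_1+r_2,\max(b_2,b_1-r_2),C_2(n)C_1(n+r_2))$\ndash tame.

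For part (1), suppose $F$ is a tame direct summand of $G$ via tame maps $\varphi_F\colon F\to G$, $\psi_F\colon G\to F$ with $\psi_F\circ\varphi_F=\Id_F$, and that $G$ is itself a tame direct summand of some $\Sigma(B)$ via tame maps $\varphi_G\colon G\to\Sigma(B)$, $\psi_G\colon\Sigma(B)\to G$ with $\psi_G\circ\varphi_G=\Id_G$. I set $\varphi:=\varphi_G\circ\varphi_F\colon F\to\Sigma(B)$ and $\psi:=\psi_F\circ\psi_G\colon\Sigma(B)\to F$. By the composition remark both maps are tame, and $\psi\circ\varphi=\psi_F\circ(\psi_G\circ\varphi_G)\circ\varphi_F=\psi_F\circ\varphi_F=\Id_F$, exhibiting $F$ as a tame direct summand of $\Sigma(B)$.

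For part (2), let $F_1$ and $F_2$ be tame Fr\'echet spaces with Banach spaces $B_1,B_2$ and tame direct summand data $\varphi_i\colon F_i\to\Sigma(B_i)$, $\psi_i\colon\Sigma(B_i)\to F_i$. I equip $F_1\times F_2$ with the grading $\|(f_1,f_2)\|_n:=\|f_1\|_n+\|f_2\|_n$ and form the Banach space $B:=B_1\oplus B_2$ with the sum norm. The key observation is that $\Sigma(B_1\oplus B_2)$ and $\Sigma(B_1)\times\Sigma(B_2)$ are tamely isomorphic: a sequence $((b_{1,k},b_{2,k}))_{k\in\mathbb{N}}$ is exponentially decreasing in $B$ iff each component sequence is exponentially decreasing in its $B_i$, and the two gradings agree up to a constant factor of at most $2$ at every level, so the identity map of underlying vector spaces is $(0,0,2)$\ndash tame in both directions. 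Composing this isomorphism with $(\varphi_1,\varphi_2)\colon F_1\times F_2\to\Sigma(B_1)\times\Sigma(B_2)$ and with $(\psi_1,\psi_2)\colon\Sigma(B_1)\times\Sigma(B_2)\to F_1\times F_2$ realises $F_1\times F_2$ as a tame direct summand of $\Sigma(B)$.

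There is no real obstacle beyond the bookkeeping of tame constants; the one point requiring a small argument is the claim that $\Sigma(B_1\oplus B_2)\cong\Sigma(B_1)\times\Sigma(B_2)$ tamely, which is immediate from the coordinatewise form of the $l_1^n$\ndash grading and the equivalence $\|b_1\|+\|b_2\|\asymp\max(\|b_1\|,\|b_2\|)$ on $B$.
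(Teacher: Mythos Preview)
Your argument is correct and is precisely the standard one. Note that the paper itself does not give a proof of this lemma: it simply writes ``For the omitted proofs and additional examples see~\cite{Hamilton82}.'' Your approach---transitivity of the tame-direct-summand relation for part~(1), and the tame identification $\Sigma(B_1\oplus B_2)\cong\Sigma(B_1)\times\Sigma(B_2)$ for part~(2)---is exactly how Hamilton proceeds, so there is nothing to compare beyond saying that you have reconstructed the intended proof. One minor point of bookkeeping you left implicit is that the product map $(\varphi_1,\varphi_2)$ of two tame linear maps is again tame with respect to the sum grading on the product; this is an immediate estimate of the same flavour as your composition remark, and worth stating once for completeness.
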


For the omitted proofs and additional examples see~\cite{Hamilton82}.
\noindent Let us review example \ref{frechetexamples}. 

\begin{example}
\begin{enumerate}
\item Banach spaces, are trivially tame Fr\'echet spaces with $B\hookrightarrow \Sigma(B), b\mapsto (b,0,0,\dots)$ which is trivially exponentially decreasing .
\item For the space of holomorphic functions on $\mathbb{C}$ tameness depends on the sequence of norms and hence on choice of the compact subsets $K_n$.
\begin{itemize}
\item The sequence of norms $\|f\|^K_n:= \displaystyle\sup_{z\in K_n} |f(z)|$ is graded iff $K_{n}\subset K_{n+1}$. 
\item Let $B(0;n):=\{x\in \mathbb{C}|\|x\|\leq n\}$ and $$\|f\|_n:=\sup_{z\in B(0;n)}|f(z)|\,.$$ The tame Fr\'echet space $(Hol(\mathbb{C}, \mathbb{C}; \|\phantom{z}\|_n)$ is $(r,b,C(n)\equiv 1)$-tame equivalent to the Fr\'echet space $(\textrm{Hol}(\mathbb{C}, \mathbb{C}; \|\phantom{z}\|^K_n)$ iff there are some integers $b,k\geq 0$ such that $B(0;n)\subset K_{n+k}\subset B(0,n+2k)$ for all $n\geq b$.
\end{itemize}
\end{enumerate} 
\end{example}

\noindent We now describe some nonlinear concepts in the category of tame Fr\'echet spaces: we start by extending the concept of tame maps from linear to non-linear maps:

\begin{definition}
A nonlinear map $\Phi: U\subset F \longrightarrow G$ is called $(r, b, C(n))$-tame iff it satisfies the inequality 
\begin{displaymath}
\|\Phi(f)\|_{n}\leq C(n)(1+\| f\|_{n+r})\,\forall n>b\,.
\end{displaymath}
$\Phi$ is called tame iff it is $(r,b,C(n))$-tame for some $(r,b, C(n))$.
\end{definition}

\begin{example}
\label{Example:Banach_tame_maps}
Let $F$ and $G$ be Banach spaces (hence the collection of norms consists of one norm) and $\Phi:F\longrightarrow G$ is a $(r_1,b_1,C_1)$\ndash tame isomorphism with a $(r_2, b_2,C_2)$\ndash tame inverse $\Phi^{-1}$. If $b_1\geq 2$ and $b_2\geq 2$ the condition on the norms vanishes. If $b_1=r_1=b_2=r_2=0$ we get 
\begin{displaymath}
\|\Phi(f)\|\leq C_1(1+\| f\|)\,\quad\textrm{and}\quad \|\Phi^{-1}(g)\|\leq C_2(1+\| g\|)\, .
\end{displaymath}
After some elementary manipulations we find
\begin{displaymath}
 \frac{1}{C_2}\|f\|-1\leq \|\Phi(f)\|\leq C_1(1+\|f\|)\,.
\end{displaymath}
This is the definition of a quasi-isometry~\cite{Burago01}. Hence for Banach spaces tame isomorphisms are exactly quasi-isometries.
\end{example}

This example highlights the real nature of the tameness condition: The tameness condition is a notion of quasi-isometries, adapted to the functional analytic setting of Fr\'echet spaces. Different tame structures on the same Fr\'echet spaces can be viewed as a generalization of the notion of different quasi-isometry classes on the same topological space.

\begin{lemma}[Construction of tame maps]~
\label{constructionoftamemaps}
Let $G_{i, i=1,\dots k}$ be tame Fr\'echet spaces and define $G=\bigotimes_{i=1}^k G_i$.
\begin{enumerate}
\item Let $\Phi: U\subset F \longrightarrow G$ be a tame map. Define the projections $\pi_i:G \longrightarrow G_i, i=1,\dots, n$. The maps
\begin{displaymath}
\Phi_i:=\pi_i \circ \Phi: U\longrightarrow G_i
\end{displaymath}
are tame as well.
\item Let $\Phi_i: U \subset F\longrightarrow G_{i, i\in 1,\dots, k}$ be $(r_i, b_i, C_i(n))$-tame maps. Then the map
\begin{displaymath}
\Phi:=(\Phi_1,\dots, \Phi_k): U \longrightarrow G
\end{displaymath}
 $$\textrm{is}\quad\left(\max(r_i),\max(b_i), \sum_{i=1}^k C_i(n)\right)\quad\textrm{-tame.}$$
\end{enumerate}
\end{lemma}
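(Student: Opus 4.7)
The plan is to fix the natural grading on the product $G=\bigotimes_{i=1}^{k}G_{i}$, namely $\|(g_{1},\dots,g_{k})\|_{n}:=\sum_{i=1}^{k}\|g_{i}\|_{n}$. That this is a grading follows from the earlier lemma asserting that direct sums of graded Fr\'echet spaces are graded, and it is the choice that is implicit in the target constant $\sum_{i=1}^{k}C_{i}(n)$ appearing in part (2).

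For part (1), I would first observe that each coordinate projection $\pi_{i}\colon G\to G_{i}$ is $(0,0,1)$\ndash tame and linear, because
\begin{displaymath}
\|\pi_{i}(g)\|_{n}=\|g_{i}\|_{n}\leq\sum_{j=1}^{k}\|g_{j}\|_{n}=\|g\|_{n}.
\end{displaymath}
If $\Phi$ is $(r,b,C(n))$\ndash tame, applying this to $\Phi(f)$ yields, for all $n\geq b$,
\begin{displaymath}
\|\Phi_{i}(f)\|_{n}=\|\pi_{i}\Phi(f)\|_{n}\leq\|\Phi(f)\|_{n}\leq C(n)\bigl(1+\|f\|_{n+r}\bigr),
\end{displaymath}
so $\Phi_{i}$ is again $(r,b,C(n))$\ndash tame.

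For part (2), I would write out the $k$ defining inequalities and add them. For every $n\geq\max_{i}b_{i}$ each of the estimates $\|\Phi_{i}(f)\|_{n}\leq C_{i}(n)(1+\|f\|_{n+r_{i}})$ is available, and by the monotonicity of the grading one has $\|f\|_{n+r_{i}}\leq\|f\|_{n+\max_{j}r_{j}}$. Summing over $i$ gives
\begin{displaymath}
\|\Phi(f)\|_{n}=\sum_{i=1}^{k}\|\Phi_{i}(f)\|_{n}\leq\sum_{i=1}^{k}C_{i}(n)\bigl(1+\|f\|_{n+\max_{j}r_{j}}\bigr)=\Bigl(\sum_{i=1}^{k}C_{i}(n)\Bigr)\bigl(1+\|f\|_{n+\max_{j}r_{j}}\bigr),
\end{displaymath}
which is exactly the $(\max_{i}r_{i},\max_{i}b_{i},\sum_{i}C_{i}(n))$\ndash tameness of $\Phi$.

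There is really no hard step here; the only point that needs care is fixing the grading on the product so that the constants in (2) come out as stated. If instead the sup\ndash grading $\max_{i}\|g_{i}\|_{n}$ were chosen, one would get the sharper constant $\max_{i}C_{i}(n)$, but then part (2) as stated (with $\sum_{i}C_{i}(n)$) would follow trivially from $\max\leq\sum$. Both conventions are compatible with the statement, and once the grading is pinned down the computation is a direct bookkeeping argument from the definitions.
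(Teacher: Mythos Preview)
Your proof is correct and follows essentially the same approach as the paper: in part (1) you observe that projections are $(0,0,1)$-tame and compose, and in part (2) you use monotonicity of the grading to replace each $r_i$ by $\max_j r_j$ and then add the estimates. The only cosmetic difference is that the paper reduces part (2) to the case $k=2$ by induction before carrying out the same computation you do directly for general $k$.
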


\begin{proof}~
\begin{enumerate}
\item Projections onto a direct factor are $\left(0,0,(1)_{n\in \mathbb{N}}\right)$-tame. The composition of tame maps is tame. Thus $\Phi_{i}$ is tame.
\item By induction on $k$ it is enough to show this in the special case $k=2$. 
Let $f\in U\subset F$. \begin{align*}
\|\Phi(f)\|_n &= \|\Phi(f)\|^1_n+\|\Phi(f)\|^2_n \leq\\
              &\leq C_1(n)(1+\| f\|_{n+r_1})+ C_2(n)(1+\|f\|_{n+r_2}) \leq\\
							&\leq C_1(n)(1+\| f\|_{n+\max(r_1, r_2)})+ C_2(n)(1+\|f\|_{n+\max(r_1, r_2)})= \\
							&\leq (C_1(n)+C_2(n))(1+\| f\|_{n+\max(r_1, r_2)})
\end{align*}
for all $n\geq \max(b_1, b_2)$.
\end{enumerate}
\end{proof}

\begin{remark}
The conclusion of the second part of Lemma~\ref{constructionoftamemaps} is correct only for finite products.
\end{remark}

\subsection{The Nash-Moser inverse function theorem}

\noindent The ultimate reason for the introduction of the category of tame Fr\'echet spaces and tame maps is the Nash-Moser inverse function theorem. 
We quote the version of~\cite{Hamilton82}, theorem 1.1.1.:

\begin{theorem}[Nash-Moser inverse function theorem]
Let $F$ and $G$ be tame (Fr\'echet) spaces and $\Phi: U\subseteq F \longrightarrow G$ a smooth tame map. Suppose that the equation for the derivative $D\Phi(f)h=k$ has a unique solution $h=V\Phi(f)k$ for all $f\in U$ and all $k$ and that the family of inverses $V\Phi:U\times G \longrightarrow F$ is a smooth tame map. Then $\Phi$ is locally invertible, and each local inverse $\Phi^{-1}$ is a smooth tame map.
\end{theorem}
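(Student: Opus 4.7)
The plan is to adapt Nash's modified Newton iteration with smoothing operators to the tame Fr\'echet setting. The fundamental obstacle, absent in Banach spaces, is the \emph{loss of derivatives}: a single application of $V\Phi(f)$ costs $r$ norms, so the naive Newton scheme $f_{k+1}=f_k-V\Phi(f_k)(\Phi(f_k)-g)$ would require infinitely many norms to converge. Nash's trick is to insert smoothing operators that regain the lost derivatives at the price of a small error, then arrange the quadratic convergence of Newton's method to dominate that error.

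First I would construct a family of smoothing operators $\{S_t\}_{t\geq 0}$ on the model space $\Sigma(B)$. Using the realization $\Sigma(B)\cong \textrm{Hol}(\mathbb{C},B)$ of Lemma~\ref{lemma:sigma_B_is_holomorphic}, a natural candidate is Taylor-coefficient truncation $S_t\bigl(\sum_k f_k z^k\bigr):=\sum_{k\leq e^t}f_kz^k$. A direct computation in the $\ell_1^n$ grading of Example~\ref{example:two equivalent norms} yields the standard estimates
\begin{align*}
\|S_tf\|_n &\leq e^{(n-m)t}\|f\|_m \quad \textrm{for } n\geq m,\\
\|(\Id-S_t)f\|_n &\leq e^{-(m-n)t}\|f\|_m \quad \textrm{for } n\leq m,
\end{align*}
together with an interpolation inequality $\|f\|_{\ell}\leq \|f\|_n^{\lambda}\|f\|_m^{1-\lambda}$ for $\ell=\lambda n+(1-\lambda)m$. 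By the tame direct summand property defining a tame Fr\'echet space, these smoothing operators transfer from $\Sigma(B)$ to $F$ and $G$ up to loss of finitely many norms controlled by the tame constants of the embeddings.

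Second I would run the modified Newton iteration
\begin{displaymath}
f_{k+1}=f_k+S_{t_k}\,V\Phi(f_k)\,(g-\Phi(f_k))
\end{displaymath}
with parameters $t_k=\rho^k t_0$ for some fixed $\rho\in(1,2)$. Using a Taylor expansion $\Phi(f+h)=\Phi(f)+D\Phi(f)h+Q(f,h)$ with quadratic remainder $Q$, the residual $e_{k+1}:=g-\Phi(f_{k+1})$ decomposes into a quadratic term in $e_k$, a smoothing error $(\Id-S_{t_k})V\Phi(f_k)e_k$, and the Taylor remainder. The heart of the argument is a simultaneous induction on two bounds: superexponential decay $\|e_k\|_{n_0}\leq A\alpha^{\rho^k}$ in a fixed low norm, and uniform control $\|f_k-f_0\|_{n_0+d}\leq M$ in a high norm, with $d$ chosen large enough that the sequence $(f_k)$ remains inside $U$. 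The integers $n_0$, $d$ and the parameters $t_0$, $\rho$ must be tuned jointly to the tame constants $(r,b,C(n))$ of $\Phi$, $V\Phi$ and the smoothing family.

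The main obstacle is precisely this bookkeeping: every smoothing step and every application of $V\Phi$ trades between high and low norms, and one must verify that the quadratic gain on $\|e_k\|_{n_0}$ absorbs the polynomial losses produced by smoothing, by the tame constants, and by the Taylor remainder; this is where the assumption that $V\Phi$ be tame on $U\times G$ jointly (rather than merely $f$-wise) becomes indispensable. Once convergence $f_k\to f^{*}\in U$ is established, continuity of $\Phi$ gives $\Phi(f^{*})=g$, and local uniqueness follows from the lower bound $\|\Phi(f)-\Phi(f')\|_n\geq c\|f-f'\|_n$ provided by $V\Phi$. Smoothness of the local inverse then comes from the chain rule, since $D\Phi^{-1}(g)=V\Phi(\Phi^{-1}(g))$ is a composition of smooth tame maps and higher derivatives follow inductively. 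Finally, tameness of $\Phi^{-1}$ is extracted from the $k$-independent tame estimates on the iterates $f_k$ viewed as functions of the target $g$.
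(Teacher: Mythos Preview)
The paper does not prove this theorem at all: it explicitly quotes the statement from Hamilton~\cite{Hamilton82}, Theorem~1.1.1, and remarks that ``a description of this theorem, a proof and some of its applications may be found in the article~\cite{Hamilton82}.'' There is therefore no in-paper proof to compare against.

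That said, your sketch is the standard route and is essentially the one Hamilton himself takes: construct smoothing operators on the model space $\Sigma(B)$, transfer them to $F$ via the tame direct summand structure, and run a modified Newton iteration $f_{k+1}=f_k+S_{t_k}V\Phi(f_k)(g-\Phi(f_k))$ with a rapidly growing sequence of smoothing parameters. The decomposition of the residual into a quadratic term, a smoothing error, and a Taylor remainder, together with the joint induction on a low-norm decay and a high-norm bound, is exactly the architecture of Hamilton's proof. Your emphasis on the joint tameness of $V\Phi$ on $U\times G$ is also correct and is the crucial hypothesis that makes the bookkeeping close. So while the paper itself offers nothing to compare, your outline matches the referenced source; the only caveat is that the ``tuning'' step you allude to is where all the difficulty lies, and a complete proof requires several pages of careful inequalities that your sketch (appropriately) does not attempt to reproduce.
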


\noindent A description of this theorem, a proof and some of its applications may be found in the article~\cite{Hamilton82}.

The crucial difference to the well-known inverse function theorem for Banach spaces is the following: In the classical inverse function theorem for Banach spaces it is enough to assume that the differential is invertible in a single point, to deduce invertibility of the function in a whole open neighborhood $U\ni p$. Hence invertibility in one point is enough to get nonlocal information. The important additional assumption in the situation of Fr\'echet spaces is that the invertibility of the differential has to be assumed not only in a single point $p$ but in a small neighborhood $U$ around $p$. 

This additional condition is necessary~\cite{Hamilton82} because in contrast to the Banach space situation it is not true in the Fr\'echet space case that the existence of an invertible differential in one point leads to invertibility in a neighborhood. Remark that this is a condition, that changes significantly the behavior of functions with respect to invertibility.

Let us quote the following result~\cite{Hamilton82}, theorem 3.1.1.\ characterizing the family of smooth tame inverses.

\begin{theorem}
 Let $L: (U \subseteq F) \times H \longrightarrow K$ be a smooth tame family of linear
maps. Suppose that the equation 
\begin{displaymath}
L(f)h = k
\end{displaymath} has a unique solution $h$ for all $f$ and
$k$ and that the family of inverses $V(f)k = h$ is continuous and tame as a map from $K$ to $H$.
Then $V$ is also a smooth tame map to $H$.
\begin{displaymath}
V:(U\subseteq F)\times K\longrightarrow H.
\end{displaymath}
\end{theorem}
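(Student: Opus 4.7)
The plan is to derive an explicit formula for $DV$ by differentiating the defining relation $L(f)V(f)k=k$ formally, verify that this formula really gives the derivative, and then run the argument inductively on higher derivatives.

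Formal differentiation of $L(f)V(f,k)=k$ in a direction $(g,l)\in F\times K$ produces the candidate
$$DV(f,k)(g,l) \;=\; V(f)\,l \;-\; V(f)\bigl[DL(f)(g)\cdot V(f,k)\bigr].$$
To check that this is the Gateaux derivative, I would use the exact identity
$$V(f+g,k+l) - V(f,k) \;=\; V(f+g)\Bigl[l + \bigl(L(f)-L(f+g)\bigr)V(f,k)\Bigr],$$
which follows by subtracting $L(f)V(f,k)=k$ from $L(f+g)V(f+g,k+l)=k+l$ and applying the inverse $V(f+g)$ to both sides. Expanding $L(f+g)-L(f)=DL(f)(g)+R(f,g)$, where smoothness of $L$ guarantees that $R$ is a tame remainder of first order in $g$, and invoking the continuity of $V$ in $f$ to pass $V(f+g)$ to $V(f)$ in the lowest-order terms, one obtains
$$V(f+g,k+l) - V(f,k) - DV(f,k)(g,l) \;=\; o\!\left(\|(g,l)\|_n\right)$$
in each seminorm. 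This establishes that $V$ is Gateaux-differentiable with the formula above.

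Next I would observe that the candidate derivative is built from $V$, $L$, $DL$, projections, and compositions. Since $V$ is continuous and tame by hypothesis, $L$ is smooth tame, and compositions of smooth tame maps are smooth tame (together with the finite-product result in Lemma~\ref{constructionoftamemaps}), the map $DV$ is again continuous and tame. One then runs the same argument for $DV$ in place of $V$: differentiating $L(f)\,DV(f,k)(g,l)=l-DL(f)(g)V(f,k)$ produces a formula for $D^2V$ that is again a polynomial-type composition of $V$, $DV$ already known to be tame, and derivatives $D^jL$ with $j\le 2$. By induction each $D^nV$ exists and is continuous and tame, so $V$ is a smooth tame map.

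The main obstacle is the first step: verifying that the remainder in the expansion above is genuinely $o$ in every seminorm, not merely continuous. The subtlety is that $V(f+g)$ is applied to a term that is itself small, so one needs to control the tame norm of $V$ on a whole neighbourhood and combine this with the smooth-tame remainder estimate for $L$ at order one. The continuity (not merely tameness) of $V$ is what lets the bootstrap begin; the tameness of $V$ then enters only to ensure that the derivative inherits tameness rather than mere continuity. Once the $C^1$ step is complete, the inductive extension to $C^\infty$ is essentially mechanical, since all higher derivatives can be written explicitly as tame polynomial expressions in $V$ and the known smooth tame derivatives of $L$.
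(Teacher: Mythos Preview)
The paper does not actually prove this theorem: it is quoted verbatim as Theorem~3.1.1 of \cite{Hamilton82}, with no argument given. So there is nothing in the paper to compare against.

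That said, your approach is essentially Hamilton's own proof of this result in \cite{Hamilton82}: differentiate the defining identity $L(f)V(f)k=k$ to obtain an explicit formula for $DV$ in terms of $V$ and $DL$, verify it, and bootstrap inductively. One small wobble: in Hamilton's calculus the derivative is defined as a Gateaux (directional) derivative, and $C^1$ means that $(f,k,g,l)\mapsto DV(f,k)(g,l)$ is jointly continuous. The verification step is therefore not a uniform estimate of the form $o(\|(g,l)\|_n)$ --- there is no single norm in a Fr\'echet space --- but rather the computation of the limit of $\tfrac{1}{t}\bigl(V(f+tg,k+tl)-V(f,k)\bigr)$ as $t\to 0$, using your exact identity with $(g,l)$ replaced by $(tg,tl)$ and the assumed continuity of $V$ and of $DL$. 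Once that is fixed the rest of your outline is correct and matches the standard argument.
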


\subsection{Tame Fr\'echet manifolds}
\label{tame_Frechet_manifold}

Let us now introduce the notion of a tame Fr\'echet manifold~\cite{Hamilton82}.

\begin{definition}[Tame Fr\'echet manifold]
A tame Fr\'echet  manifold is a Fr\'echet manifold with charts in a tame Fr\'echet space
such that the chart transition functions are smooth tame maps.
\end{definition}

\begin{example}
\label{example:BanachmanifoldistameFrechet}
In view of example~\ref{Example:Banach_tame_maps}, a Banach manifold is a tame Fr\'echet manifold iff it admits an atlas such that the chart transition functions are quasi-isometries.
\end{example}

\begin{definition}
Let $M$ and $N$ be two tame Fr\'echet manifolds modeled on $F$ resp. $G$. 
\begin{itemize}
\item A map $f:F\longrightarrow N$ is tame if for any chart $\psi: N\rightarrow U_{\psi}\subset G$ the concatenation $$f\circ \psi : F\longrightarrow U_{\psi}$$is tame whenever it is defined.
\item A map $f: M\longrightarrow N$ is tame iff for every pair of charts $\psi_i:V_i \subset N \longrightarrow V_i'$ and $\varphi_j: U_i \subset M \longrightarrow U_i'$, the map $$\psi_i \circ f \circ \varphi_j^{-1}:U_i'\longrightarrow V_i'$$ is tame whenever it is defined. 
\end{itemize}
\end{definition}

\section{Submanifolds of co-Banach and co-finite type}
\label{sect_submanifolds_finite_type}

In this section we introduce the new notion of \emph{tame Fr\'echet submanifolds of co-Banach type} and \emph{tame Fr\'echet submanifolds of co-finite type}. In a first subsection we start with a review of common finiteness conditions before we introduce the new classes of tame Fr\'echet submanifolds in the second subsection.

\subsection{Finiteness conditions}

An incautious generalization of notions of finite differential geometry to the infinite dimensional situation usually fails. To preserve the validity of nontrivial results in most instances finiteness conditions are required.
These finiteness conditions tend to be either of functional analytic nature (i.e. convergence of certain infinite sums) or of geometric nature (finiteness requirements on certain objects). Typical examples (which are also the most important ones for the applications in Kac-Moody geometry, which we have in mind) are
\begin{itemize}
 \item Fredholm conditions and compactness conditions,
 \item finite codimensions.
\end{itemize}

These conditions were introduced in various settings for different reasons. For example in her theory of isoparametric submanifolds in Hilbert spaces  Chuu-Lian Terng~\cite{Terng89} requires submanifolds to fulfill a Fredholm condition and a finite codimension condition. These conditions were introduced with the aim, to define an appropriate class of submanifolds to study infinite dimensional algebraic topology, especially infinite dimensional Morse theory. The proper-Fredholm condition assures, that Morse functions have only critical points of finite index. As the infinite dimensional sphere is contractible~\cite{Kuiper65}, this is a necessary condition for the development of Morse theory. The requirement for a submanifold to have finite codimension makes the normal bundle into a finite dimensional vector bundle. Both conditions together are required in the concept of proper-Fredholm submanifolds as defined by Chuu-Lian Terng~\cite{Terng89, PalaisTerng88}. Let us recall her definitions:

Let $V$ be a Hilbert space and $M\subset V$ an immersed submanifold with finite codimension, $TM$ its tangent bundle and $\nu(M)$ its normal bundle. Denote by $N_{\leq r}M$ the disc-normal bundle of $M$ consisting of discs of radius $r$. Define the \emph{endpoint map} by $Y:\nu(M)\rightarrow V, x\mapsto x+v$ for $x\in M$ and $v\in \nu(M)_x$.

Using these notations proper Fredholm submanifolds are defined as follows~\cite{Terng89, PalaisTerng88}:

\begin{definition}[proper-Fredholm submanifolds]
\label{definition:proper-Fredholm-submanifold}
An immersed submanifold $M\subset V$ of finite codimension is called \emph{proper Fredholm} (abbreviated PF) if the restriction of the endpoint map $Y:\nu(M)\longrightarrow N_{\leq r}M$ is proper Fredholm. 
\end{definition}

\begin{definition}
\label{isoparametric-pf-manifold}
 An immersed PF-submanifold $f:M\longrightarrow V$ of a Hilbert space $V$ is called isoparametric if
\begin{enumerate}
 \item $codim(M)$ is finite.
 \item $\nu(M)$ is globally flat.
 \item for any parallel normal field $v$ on $M$, the shape operators $A_{v(x)}$ and $A_{v(y)}$ are orthogonally equivalent for all $x,y$ in $M$. 
\end{enumerate}
\end{definition}

This class of submanifolds in Hilbert spaces are the guiding examples for the introduction of tame Fr\'echet submanifolds of co-finite type.

\subsection{Tame Fr\'echet submanifolds of co-finite type and of co-Banach type}

Tame Fr\'echet submanifolds of co-finite type are central objects of affine Kac-Moody geometry. They are a special case of the more general tame Fr\'echet submanifolds of co-Banach type.

\begin{definition}[Tame Fr\'echet submanifold of co-Banach type]
\label{TamesubmanifoldofBanachtype}
Let $F$ be a tame Fr\'echet spaces and $N$ a tame $F$-manifold.  A subset $M\subset N$ is a tame Fr\'echet submanifold of co-Banach type in $F$ iff the following is true:
 There is a complemented subspace $B\subset F$ which is a Banach space with a complement $F_0$ (hence $F= B\times F_0$), such that
 for every point $m\in M$ there are open sets $U(m)\subset N$, $V(m)\subset F$ and a tame Fr\'echet chart $\varphi_m:U(m)\longrightarrow V(m)\subset F$ such that
$$\varphi_{M}(M\cap U(m))= F_0\cap V(m)\,.$$
\end{definition}

In important applications, for example in the context of isoparametric submanifolds, it is enough to consider finite dimensional Banach spaces. Let us thus introduce the following definition:

\begin{definition}[Tame Fr\'echet submanifold of co-finite type]
\label{Tamesubmanifoldoffinitetype}
A tame Fr\'echet submanifold of co-Banach type is called of co-finite type if there is some $n\in \mathbb{N}$ such that $B=\mathbb{R}^n$.
\end{definition}

To justify the name ``tame Fr\'echet submanifold'' in the above definitions we have to establish, that the subset is actually a submanifold. This is the purpose of the following lemma:

\begin{lemma}
A submanifold of co-Banach type is a tame Fr\'echet manifold.
\end{lemma}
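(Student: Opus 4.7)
The plan is to exhibit a tame Fréchet atlas on $M$ modeled on the complementary factor $F_0$. First I would observe that since the splitting $F = B \times F_0$ is part of the data defining a co-Banach submanifold, it should be understood as a splitting in the tame Fréchet category, so that the inclusion $\iota_{F_0}\colon F_0 \hookrightarrow F$ and the projection $\pi_{F_0}\colon F \to F_0$ are both tame linear maps. Then $F_0$ is a tame direct summand of the tame Fréchet space $F$, hence by Lemma~\ref{constructionoftamespaces} it is itself a tame Fréchet space. This is the model space for the would-be manifold structure on $M$.

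Next I would define the charts on $M$. For each $m \in M$, set $\widetilde{U}(m) := M \cap U(m)$ and $\widetilde{V}(m) := F_0 \cap V(m)$, and define $\psi_m := \varphi_m|_{\widetilde{U}(m)} \colon \widetilde{U}(m) \to \widetilde{V}(m)$. By the defining property $\varphi_m(M \cap U(m)) = F_0 \cap V(m)$, the map $\psi_m$ is a bijection onto an open subset of $F_0$, and the family $\{\widetilde{U}(m)\}_{m \in M}$ covers $M$. It remains to check that the transition functions $\psi_{m_2} \circ \psi_{m_1}^{-1}$ are smooth tame maps between open subsets of $F_0$, which I would obtain by factoring them through the ambient transition, namely
\begin{equation*}
\psi_{m_2} \circ \psi_{m_1}^{-1} = \pi_{F_0} \circ \bigl(\varphi_{m_2} \circ \varphi_{m_1}^{-1}\bigr) \circ \iota_{F_0}.
\end{equation*}
The middle map is smooth and tame because $N$ is a tame Fréchet manifold, and the outer two maps are tame linear by the preceding paragraph; composition of smooth tame maps is smooth tame, so the transition is smooth tame. (One should also verify that this composition indeed lands in $F_0$, but this is automatic: $\varphi_{m_2} \circ \varphi_{m_1}^{-1}$ sends $F_0 \cap \varphi_{m_1}(U(m_1) \cap U(m_2))$ into $F_0$, since both charts carry $M$ into $F_0$.)

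The expected obstacle is essentially notational rather than conceptual: one must be careful that the complementation $F = B \times F_0$ is tame, not merely topological, so that $F_0$ inherits a tame structure and the projection/inclusion can be used to restrict smooth tame maps to $F_0$. With that convention in place (as is natural in the co-Banach setting), the verification reduces to the functoriality properties collected earlier, and the resulting atlas gives $M$ the structure of a tame Fréchet manifold modeled on $F_0$.
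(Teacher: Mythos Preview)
Your proof is correct and follows essentially the same approach as the paper: construct charts on $M$ by restricting the ambient charts $\varphi_m$ to $M\cap U(m)$, and deduce smooth tameness of the transition maps from smooth tameness of the ambient transitions $\varphi_{m'}\circ\varphi_m^{-1}$. Your version is in fact more careful than the paper's, which simply asserts that restrictions of tame maps are tame; you make this precise via the factorization $\pi_{F_0}\circ(\varphi_{m_2}\circ\varphi_{m_1}^{-1})\circ\iota_{F_0}$ and explicitly verify that $F_0$ is a tame Fr\'echet space as a tame direct summand of $F$.
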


\begin{proof}~
Let $N$ be a tame Fr\'echet manifold and $M\subset N$ a submanifold of co-Banach type. We endow $M$ with the induced topology.
To prove the lemma we construct an atlas for $M$. Hence we show that there are charts, such that each point is in at least one chart and such that the chart transition functions are smooth tame maps. To this end we define charts via the maps $\varphi_m: U(m)\longrightarrow V(m)$ as follows: For each point $m\in M$ we define a chart by $\varphi_{M,m}:M\cap U(m) \longrightarrow F_0\cap V(m)$. We have to establish that chart transition functions are tame functions. 

As $N$ is a tame Fr\'echet manifold, chart transition function $\varphi_{m'}\circ\varphi_{m}^{-1}$ for charts $\varphi_{m'}$ and $\varphi_{m'}$ are tame maps, whenever they are defined. Hence the restriction of the chart transition functions $\varphi_{m'}\circ\varphi_{m}^{-1}$ to $M$  are also tame Fr\'echet maps, whenever they are defined.
\end{proof}

In the following result we describe the tame Fr\'echet structure on a tame Fr\'echet submanifold of co-Banach type. The key observation is, that the tame structure on the ambient manifold actually induces a natural tame structure on the submanifold. A map from a tame Fr\'echet space into a tame subamnifold of co-Banach type is tame iff it is tame as a map into the ambient space. This result is an important ingredient in the proof that affine Kac-Moody groups are tame Fr\'echet Lie groups in~\cite{Freyn09}.

\begin{theorem}
\label{mapinfrechetsubmanifold}
Let $M\subset F$ be a tame Fr\'echet submanifold of co-Banach type and let $i: M\hookrightarrow F$ denote the inclusion map. Let $H$ be a tame Fr\'echet space. A map $\varphi_M:H \longrightarrow M$ is tame if it is tame as a map $\varphi_F:=i\circ \varphi_M:H\rightarrow F$.
\end{theorem}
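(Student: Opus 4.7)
The plan is to unwind the definitions and observe that, because every chart on $M$ is by construction the \emph{restriction} of an ambient chart on $F$, the hypothesis that $\varphi_F=i\circ\varphi_M$ is tame propagates to $\varphi_M$ with no extra work. First I would unpack what tameness means for $\varphi_M\colon H\to M$: since $H$ is a tame Fr\'echet space (a manifold with the identity as a single global chart), the required condition is that for every submanifold chart $\varphi_{M,m}\colon M\cap U(m)\to F_0\cap V(m)$, the composition
\[
\varphi_{M,m}\circ\varphi_M\colon \varphi_M^{-1}(M\cap U(m))\longrightarrow F_0\cap V(m)
\]
is a tame map between (open sets in) tame Fr\'echet spaces. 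Note that $F_0$ is itself a tame Fr\'echet space, because the decomposition $F=B\times F_0$ exhibits $F_0$ as a tame direct summand of $F$ via the obvious projection and injection, and Lemma~\ref{constructionoftamespaces} then gives tameness of $F_0$.

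Next I would exploit the commutative diagram built into Definition~\ref{TamesubmanifoldofBanachtype}. The chart $\varphi_{M,m}$ is, by definition, the restriction of the ambient tame chart $\varphi_m\colon U(m)\to V(m)\subset F$ to $M\cap U(m)$. Therefore on $\varphi_M^{-1}(M\cap U(m))$,
\[
\varphi_{M,m}\circ\varphi_M = \varphi_m\circ i\circ\varphi_M = \varphi_m\circ\varphi_F.
\]

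Now I would invoke the hypothesis. Since $\varphi_F\colon H\to F$ is tame and $\varphi_m$ is (the inverse of) a tame chart of $F$, the composition $\varphi_m\circ\varphi_F$ is tame as a map from $H$ into the open set $V(m)\subset F$. Restricted to $\varphi_M^{-1}(M\cap U(m))$ this composition takes values in $F_0\cap V(m)$; because $F_0$ carries the subspace grading induced from $F$, the defining norm estimates for tameness are literally the same whether the target is read as $F$ or as $F_0$ (equivalently, one post-composes with the tame linear projection $\pi_{F_0}\colon F\to F_0$, which acts as the identity on the image). This yields tameness of $\varphi_{M,m}\circ\varphi_M$ as a map into $F_0$, hence tameness of $\varphi_M\colon H\to M$.

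The only obstacle is minor and conceptual: one must align the notion of "tame" that is given as hypothesis (tameness between tame Fr\'echet spaces, as in the definition preceding Example~\ref{Example:Banach_tame_maps}) with the manifold notion of tameness used in the conclusion (tameness through every pair of charts). The reconciliation is precisely the content of Definition~\ref{TamesubmanifoldofBanachtype}: the tame smooth structure on $M$ is the one for which the submanifold charts are restrictions of ambient charts, so the local coordinate representatives of $\varphi_M$ in the $M$-atlas are nothing but the local coordinate representatives of $\varphi_F$ in the ambient atlas, restricted to the subset where they land in $F_0$.
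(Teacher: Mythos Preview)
Your proof is correct and follows essentially the same approach as the paper's own proof, which is extremely terse (two sentences): both arguments reduce tameness of $\varphi_M$ to tameness of the compositions $\varphi_m\circ\varphi_F$ with the ambient charts, using that the submanifold charts are restrictions of ambient charts. Your version is considerably more careful in justifying the auxiliary points---that $F_0$ is itself tame as a direct summand, and that passing from target $F$ to target $F_0$ does not affect the norm estimates---which the paper leaves implicit.
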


\begin{proof}
Let $\varphi$ be tame as a map $\varphi_F:H\rightarrow F$. Then the concatenation $\varphi_i\circ \varphi$ is tame for any chart $\varphi$ of $M$. Thus $\varphi$ is tame as a map into $M$.   
\end{proof}

\section{An implicit function theorem for tame maps}
\label{sect:implicit_function_theorem}

The aim of this section is to prove that the inverse image of a ``regular'' point of a tame map $\varphi$ of a tame Fr\'echet space $F$ into a Banach space (resp. $\mathbb{R}^n,n\in \mathbb{N}$) is a tame Fr\'echet submanifold of co-Banach (resp. co-finite) type. Following the finite dimensional blueprint we prove this result as a consequence of an implicit function theorem.

Usually inverse function theorems and implicit function theorems come in pairs.
In the literature various versions of implicit function theorems for classes of Fr\'echet spaces admitting smoothing operators (i.e.\ tame spaces) are proposed. See for example~\cite{Schwartz60, Sergeraert73, LojasiewiczZehnder79, Altmann84, SaintRaymond89, Poppenberg99, Krantz02, Ekeland11} and references therein. A nice implicit function theorem for maps from locally convex topological vector spaces to Fr\'echet spaces using metric estimates is described in~\cite{Glockner07}.

Richard Hamilton~\cite{Hamilton82} p.~212 proves the following theorem:

\begin{theorem}[Hamilton's implicit function theorem]
Let $F$, $G$ and $H$ be tame spaces and let $\Phi: U\subset F\times G\longrightarrow H$ be a smooth tame map.
Assume that whenever $\Phi(f,g)=0$, the partial derivative $D_f\Phi(f,g)$ is surjective, and there is a smooth tame map $V(f,g)h$ linear in $h$,
$$V:(U\subset F\times G)\times H \longrightarrow F\,,$$
and a smooth tame map $Q(f,g\{h,k\})$, bilinear in $h$ and $k$, such that for all $(f,g)$ in $U$ and all $h\in H$ we have: 
$$D_f\Phi(f,g)V(f,g)h=h+Q(f,g)\{\Phi(f,g),h\}\,,$$
so that $V$ is an approximate right inverse for $D_f\Phi$ with quadratic error $Q$. Then if $\Phi(f_0,g_0)=0$ for some $(f_0, g_0)\in U$ we can find neighborhoods of $f_0$ and $g_0$ such that for all $g$ in the neighborhood of $g_0$ there exists an $f$ in the neighborhood of $f_0$ with $\Phi(f,g)=0$. Moreover the solution $f=\Psi(g)$ is defined by a smooth tame map $\Psi$.
\end{theorem}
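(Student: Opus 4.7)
The plan is to reduce to the Nash-Moser inverse function theorem (in its version allowing approximate inverses with quadratic error, as proved in~\cite{Hamilton82}) by turning the implicit-function problem into an inverse-function problem. Define the augmented map
$$\widetilde\Phi:U\subset F\times G\longrightarrow H\times G,\qquad \widetilde\Phi(f,g):=\bigl(\Phi(f,g),\,g\bigr).$$
It is smooth and tame by Lemma~\ref{constructionoftamemaps}, and $\widetilde\Phi(f_0,g_0)=(0,g_0)$. If $\widetilde\Phi$ is locally invertible near $(f_0,g_0)$ by a smooth tame map $\widetilde\Phi^{-1}$, then $\Psi(g):=\pi_F\circ\widetilde\Phi^{-1}(0,g)$ is smooth and tame and satisfies $\Phi(\Psi(g),g)=0$ for every $g$ near $g_0$, which is exactly what the theorem asserts; uniqueness of $f$ near $f_0$ comes for free from injectivity of $\widetilde\Phi^{-1}$.

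The heart of the argument is to exhibit a smooth tame family of approximate inverses for
$$D\widetilde\Phi(f,g)(h,k)=\bigl(D_f\Phi(f,g)h+D_g\Phi(f,g)k,\,k\bigr).$$
Given $(u,v)\in H\times G$, I set $k:=v$ and $h:=V(f,g)\bigl(u-D_g\Phi(f,g)v\bigr)$. The assignment $\widetilde V(f,g)(u,v):=(h,k)$ is smooth, tame, and linear in $(u,v)$, since it is built from $V$, $D_g\Phi$, and the identity by operations covered in Lemma~\ref{constructionoftamemaps}. Plugging in and using $D_f\Phi(f,g)V(f,g)w=w+Q(f,g)\{\Phi(f,g),w\}$ gives
$$D\widetilde\Phi(f,g)\widetilde V(f,g)(u,v)=(u,v)+\bigl(Q(f,g)\{\Phi(f,g),\,u-D_g\Phi(f,g)v\},\,0\bigr),$$
so the error is bilinear in $\bigl(\Phi(f,g),(u,v)\bigr)$ and vanishes on the zero locus of $\Phi$. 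This is precisely the ``quadratic error'' hypothesis of the strong form of Nash-Moser; applying it to $\widetilde\Phi$ at the genuine zero $(f_0,g_0)$ yields the required smooth tame local inverse, and $\Psi$ is then extracted as above.

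The main obstacle is that the Nash-Moser theorem as literally stated earlier in the excerpt demands an honest two-sided inverse of the derivative, whereas here $D_f\Phi$ is only assumed surjective, with $V$ merely an approximate right inverse carrying a quadratic error $Q$; consequently $D\widetilde\Phi$ need not be injective, and the elementary statement cannot be applied directly to $\widetilde\Phi$. The proof must therefore invoke the strengthened approximate-inverse version of Nash-Moser (which Hamilton derives from the basic one by a Newton-type iteration controlled by smoothing operators). An alternative that avoids invoking this stronger theorem would be to split $F=\ker D_f\Phi(f_0,g_0)\oplus V(f_0,g_0)(H)$ as a tame direct sum and restrict $\Phi$ to the second factor, where $D_f\Phi$ becomes a genuine tame isomorphism near $(f_0,g_0)$; the delicate point in that approach is to verify that the splitting persists as a smooth tame family in a neighborhood of $(f_0,g_0)$, which is where the co-Banach type conditions of Section~\ref{sect_submanifolds_finite_type} would naturally enter.
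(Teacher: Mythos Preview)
The paper does not actually prove this theorem: immediately after the statement it writes ``For details and a proof we refer to~\cite{Hamilton82}.'' So there is no in-paper proof to compare against; the theorem is quoted from Hamilton and attributed there.

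Your argument is the standard reduction and matches Hamilton's own treatment in~\cite{Hamilton82}: augment $\Phi$ to $\widetilde\Phi(f,g)=(\Phi(f,g),g)$, build the obvious approximate inverse of $D\widetilde\Phi$ from $V$, and observe that the defect is bilinear in $(\Phi(f,g),(u,v))$. Your identification of the only real issue is also correct: the Nash--Moser theorem as stated earlier in the paper demands a genuine family of inverses, whereas here one only has an approximate right inverse with quadratic error, so one must invoke Hamilton's strengthened version (Part~III of~\cite{Hamilton82}) rather than the elementary statement quoted. Given that the paper explicitly defers to~\cite{Hamilton82} for the proof, your sketch is an accurate summary of what one finds there, and the caveat you flag is exactly the substantive point.
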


For details and a proof we refer to~\cite{Hamilton82}.
 
For the application we have in mind a far easier theorem is sufficient: 

\begin{theorem}[Implicit function theorem for tame maps]

\label{ImplicitefunctiontheoremfortameFrechetmaps}
Let $F$ be a tame Fr\'echet space, $V\cong W$ a Banach space. Let furthermore $\varphi:U\subset\ F\times V \longrightarrow W$ be a smooth tame map, such that the partial derivative $\frac{\partial}{\partial (y)}\varphi(z)$ is invertible at $z_0=(x_0,y_0)\in U$ (hence $\frac{\partial}{\partial (y)}\varphi(z)\in GL(W)$). Assume furthermore $\varphi(z_0)=0$.
Then there exist open subsets $U'\in F$ and $U''\in V$ and a smooth tame map $\psi:U'\longrightarrow U''$ such that
$$ \varphi(x,y)=0 \Leftrightarrow y=\psi(x)\,. $$
\end{theorem}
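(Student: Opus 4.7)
The plan is to derive this from the Nash--Moser inverse function theorem via the standard auxiliary-map construction that reduces the implicit function theorem to the inverse function theorem. Define $\Phi: U \subset F \times V \longrightarrow F \times W$ by $\Phi(x,y) = (x,\varphi(x,y))$. By Lemma~\ref{constructionoftamespaces} both $F \times V$ and $F \times W$ are tame Fr\'echet spaces, and by Lemma~\ref{constructionoftamemaps} together with the tameness of $\varphi$ and the identity, the map $\Phi$ is smooth and tame.

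The Fr\'echet derivative has the block form
\begin{displaymath}
D\Phi(x,y)(h,k) \;=\; \bigl(h,\; D_x\varphi(x,y)h + D_y\varphi(x,y)k\bigr),
\end{displaymath}
which is invertible at $(x,y) \in U$ precisely when $D_y\varphi(x,y) \in L(V,W)$ is invertible. At $z_0$ this holds by hypothesis. Here the assumption that $V$ and $W$ are Banach spaces is essential: the set of invertibles is open in the Banach space $L(V,W)$ and $(x,y) \mapsto D_y\varphi(x,y)$ is continuous, so there is an open neighborhood $U_1 \subset U$ of $z_0$ on which $D_y\varphi(x,y)$ remains invertible. This is the step that upgrades pointwise invertibility to invertibility on a whole neighborhood, bridging the gap between the Banach-space implicit function theorem and the more restrictive hypotheses required in the Nash--Moser setting.

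On $U_1$ the explicit inverse reads
\begin{displaymath}
(D\Phi(x,y))^{-1}(h,k) \;=\; \bigl(h,\; D_y\varphi(x,y)^{-1}(k - D_x\varphi(x,y)h)\bigr).
\end{displaymath}
To invoke Nash--Moser I must verify that the family $V\Phi(x,y)$ of such inverses is a smooth tame map. By the second theorem quoted in the excerpt it suffices to check continuity and tameness. Since operator inversion is smooth on the open set of invertibles in the Banach space $L(V,W)$, the map $(x,y) \mapsto D_y\varphi(x,y)^{-1}$ is smooth into the Banach space $L(W,V)$ and, after shrinking $U_1$, its operator norm is uniformly bounded. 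Composing with the tame derivatives $D_x\varphi$ and $D_y\varphi$ and with the continuous bilinear evaluation $L(W,V) \times W \to V$ yields the required tame estimates for $V\Phi$; this verification is the main technical step, though it is kept manageable by the fact that the inversion happens in Banach spaces (trivially graded), so tameness in $k$ and $h$ reduces to operator-norm bounds while tameness in $(x,y)$ is inherited from $D\varphi$.

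Applying the Nash--Moser inverse function theorem produces open neighborhoods and a smooth tame local inverse $\Phi^{-1}: U_2 \subset F \times W \longrightarrow \Phi^{-1}(U_2) \subset U_1$. Because $\Phi$ fixes the first coordinate, so does $\Phi^{-1}$; write $\Phi^{-1}(x,w) = (x,\eta(x,w))$ with $\eta$ smooth tame by Lemma~\ref{constructionoftamemaps}. Set $\psi(x) := \eta(x,0)$, defined on $U' := \{x \in F : (x,0) \in U_2\}$ with values in some $U'' \subset V$. By construction $\Phi(x,\psi(x)) = (x,0)$, i.e.\ $\varphi(x,\psi(x)) = 0$, and conversely any $(x,y) \in U_1$ with $\varphi(x,y) = 0$ satisfies $\Phi(x,y) = (x,0)$, forcing $y = \psi(x)$ by injectivity of $\Phi^{-1}$. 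The main obstacle throughout is the tameness of the inverse family, but the Banach-space structure on the $V,W$ factors reduces this to a standard bound on the operator norm of $D_y\varphi(x,y)^{-1}$ combined with the already-established tameness of $D\varphi$.
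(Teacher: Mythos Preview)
Your proof is correct and follows essentially the same route as the paper: you introduce the auxiliary map $\Phi(x,y)=(x,\varphi(x,y))$, use openness of the invertibles in the Banach space $L(V,W)$ to propagate invertibility of $D_y\varphi$ from $z_0$ to a neighborhood, write down the explicit inverse of $D\Phi$, and then apply the Nash--Moser theorem to obtain $\Phi^{-1}$ and extract $\psi$ from its second component. Your treatment of the tameness of the inverse family and of the final equivalence $\varphi(x,y)=0 \Leftrightarrow y=\psi(x)$ is in fact slightly more explicit than the paper's, but the architecture is identical.
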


The proof of this result relies on the Nash-Moser inverse function theorem.

\begin{proof}~
The proof  proceeds in two steps: in the first step we reformulate the setting to prepare the use of the Nash-Moser inverse function theorem, in the second step we apply this theorem to get an inverse function, which by restriction yields the implicit function.

\begin{enumerate}
\item {\bf Prepare use of the Nash-Moser inverse function theorem.} 
Define the map $\Phi:U\subset F\times V \longrightarrow F\times W$ by $\Phi(x,y):=(x, \varphi(x,y))$.
As Cartesian products of tame spaces are tame (lemma~\ref{constructionoftamespaces}), $F\times W$ is a tame space; furthermore, as the identity map and $\varphi(x,y)$ are (smooth) tame maps, $\Phi$ is a (smooth) tame map (lemma~\ref{constructionoftamemaps}). 

The differential $D\Phi:TU\longrightarrow T(F\times W)$ of $\Phi$ is given by the formula
$$D\Phi(x,y)(h',h'')=\left(h', \frac{\partial}{\partial (x)}\varphi(x,y)h'+\frac{\partial}{\partial (y)}\varphi(x,y)h''\right)$$
for $(x,y)\in U$ and $(h',h'')\in T_{(x,y)}U$. As $\Phi$ is smooth also $D\Phi(x,y)(h',h'')$ is. Especially $\frac{\partial}{\partial (y)}\varphi(x,y)$ and $\frac{\partial}{\partial (y)}\varphi(z)^{-1}$ are smooth.

The general linear group $GL(W)$ of the Banach space $W$ is open in the space of all continuous linear endomorphisms of $V$. Hence there is some open neighborhood $\widetilde{U}\subset GL(W)$ of $\frac{\partial}{\partial (y)}\varphi(z)$. As $\varphi$ is continuous, there is some neighborhood $U'(z_0)$ such that for all $(x,y)\in U'$ the differential $\frac{\partial}{\partial (y)}\varphi(z)$ takes values in $\widetilde{U}\subset GL(W)$.  
For any $(x,y)\in U'$ and for every $k=(k',k'')\in F\times W$ the equation $D\Phi(x,y)(h',h'')=(k',k'')$ has a unique solution  $$(h',h'')=V\Phi(x,y)(k',k'')\in F\times V\,.$$  

The map $V\Phi(x,y)(k',k'')$ is of a similar structure as the map $D\Phi$:
$$V\Phi(x,y)(k',k'')=\left(k', -\frac{\partial}{\partial (y)}\varphi(z)^{-1}\frac{\partial}{\partial (x)}\varphi(x,y) k' +\frac{\partial}{\partial (y)}\varphi(z)^{-1}k''\right)\,.$$ 
As a combination of smooth tame maps, it is a smooth tame family of inverses.
In consequence, the map $\Phi$ satisfies the assumptions of the Nash-Moser inverse function theorem, which we will apply in the next step.

\item {\bf Apply the Nash-Moser inverse function theorem.}
Application of the Nash-Moser inverse function theorem yields open neighborhoods $U_0\subset U$ and $\widetilde{U}_0\subset F\times W$ such that $z_0\in U_0$ and $\Phi(z_0)=(x_0,0) \subset \widetilde{U}_0$ together with an inverse $\Phi^{-1}: \widetilde{U}_0 \longrightarrow U_0$ that is a smooth tame map.

Without loss of generality assume $\widetilde{U}_0=\widetilde{U}_0'\times \widetilde{U}_0''$ such that $\widetilde{U}_0''=\widetilde{U}_0\cap W$ and $\widetilde{U}_0'=\widetilde{U}_0\cap F$. Thus we can put $\Phi^{-1}(w_1, w_2)=(w_1, \psi(w_1, w_2))$ for some map $\psi: F\times W \rightarrow V$. The map $\psi$ is tame as $V$ is a Banach space.
Thus $$\Phi(x,y)=0 \Leftrightarrow y=\psi(x)\,.$$
This completes the proof.\qedhere
\end{enumerate}
\end{proof}

Remark that in contrast to the Nash-Moser inverse function theorem, it is enough, to assume invertibility of the linearization in a single point.

\section{Constructing tame Fr\'echet submanifolds}
\label{sect:construction_submanifolds}

In this final section we establish a characterization of tame Fr\'echet submanifolds of co-Banach type as the inverse image of regular points and describe some examples. We use the following definitions:

\begin{definition}[tame Fr\'echet regular point]
Let $F$ be a tame Fr\'echet space, $B$ a Banach space and $\varphi: U\subset F\rightarrow B$ a tame Fr\'echet map. A point $p\in F$ is called a \emph{tame Fr\'echet regular point of $\varphi$} iff the following is true:
\begin{enumerate}
 \item the differential $D\varphi(p)$ is in $GL(B)$,
 \item $\textrm{kern}(D\varphi(p))$ is a complemented subspace in $T_{p}F\cong F$.
\end{enumerate}
\end{definition}

\begin{definition}[tame Fr\'echet regular value]
Let $F$ be a tame Fr\'echet space, $B$ a Banach space and $\varphi: U\subset F\rightarrow B$ a tame Fr\'echet map. A point $g\in B$ is called a \emph{tame Fr\'echet regular value of $\varphi$} iff all points $p\in \varphi^{-1}(g)$ are tame regular points.
\end{definition}

 The main result is the following:

\begin{theorem}
\label{submanifoldsoffrechetspaceinmanifold}
Let $F$ be a tame Fr\'echet space and $B$ a Banach space. Let $N_{F}$ be a tame Fr\'echet $F$-manifold and $N_B$ a Banach $B$-manifold, such that the chart transition functions are quasi-isometries (hence $N_B$ is also a tame Fr\'echet manifold). Let furthermore $\varphi: N_{F} \rightarrow N_{B}$ be a tame Fr\'echet map and assume $g\in N_{B}$ to be a regular value for $\varphi$.
Then $\varphi^{-1}(g)$ is a tame Fr\'echet submanifold of co-Banach type. 
\end{theorem}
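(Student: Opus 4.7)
The plan is to work locally at each $p \in \varphi^{-1}(g)$ and apply the implicit function theorem of Section~\ref{sect:implicit_function_theorem} to straighten the level set. First I would choose a tame Fr\'echet chart $\alpha: U(p) \to V(p) \subset F$ around $p$ and a Banach chart $\beta: U(g) \to V(g) \subset B$ around $g$; by Example~\ref{example:BanachmanifoldistameFrechet} the Banach chart is automatically a tame Fr\'echet chart since the transitions on $N_B$ are quasi-isometries. After replacing $\varphi$ by $\beta \circ \varphi \circ \alpha^{-1}$ and translating, the problem reduces to a smooth tame map $\varphi: U \subset F \to B$ with $\varphi(0) = 0$ and $0$ a regular point.

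The regular-point condition provides a closed complement $W$ to $F_0 := \ker D\varphi(0)$ in $F$, and the condition $D\varphi(0) \in GL(B)$ forces the restriction $D\varphi(0)|_W: W \to B$ to be a topological isomorphism; in particular $W$ is itself a Banach space isomorphic to $B$, and $F = F_0 \oplus W$ with continuous projections. Viewing $\varphi$ as a map on a neighborhood of $(0,0) \in F_0 \times W$, the partial derivative in the $W$-direction at $(0,0)$ is $D\varphi(0)|_W \in GL(B)$, so Theorem~\ref{ImplicitefunctiontheoremfortameFrechetmaps} produces open neighborhoods $U' \subset F_0$ and $U'' \subset W$ of the origin together with a smooth tame map $\psi: U' \to U''$ such that $\varphi(x,y) = 0$ if and only if $y = \psi(x)$ on $U' \times U''$.

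Using $\psi$ I straighten the level set via
$$\Psi: U' \times U'' \longrightarrow F_0 \times W, \qquad \Psi(x,y) := (x, y - \psi(x)),$$
whose inverse is $(x,z) \mapsto (x, z + \psi(x))$. Both maps are smooth and tame by Lemma~\ref{constructionoftamemaps}, and under $\Psi$ the set $\varphi^{-1}(0) \cap (U' \times U'')$ is carried onto $F_0 \cap \Psi(U' \times U'')$. Composing with the original chart $\alpha$ yields a tame Fr\'echet chart of $N_F$ around $p$ of exactly the form demanded by Definition~\ref{TamesubmanifoldofBanachtype}; the induced tame manifold structure on $\varphi^{-1}(g)$ then comes from the submanifold lemma stated just after that definition.

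The main obstacle I anticipate is the \emph{global} coherence built into Definition~\ref{TamesubmanifoldofBanachtype}: it asks for a single ambient splitting $F = B \times F_0$ compatible with every chart, whereas the construction above yields at each $p$ its own splitting with $F_0(p) = \ker D\varphi(p)$. To patch these I would fix a reference point $p_0$ together with its splitting $F = W_0 \oplus F_0$, and at every other $p$ post-compose the straightening chart with a tame linear automorphism of $F$ that sends $\ker D\varphi(p)$ onto $F_0$ and $W_p$ onto $W_0$. Such an automorphism exists because both complements are Banach spaces isomorphic to $B$; its tameness reduces to the observation that, near $p_0$, the kernel $\ker D\varphi(p)$ is the graph of a small tame linear map $F_0 \to W_0$ depending smoothly on $p$, so the associated change-of-splitting is a tame perturbation of the identity. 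Beyond this bookkeeping, the theorem is a direct combination of Theorem~\ref{ImplicitefunctiontheoremfortameFrechetmaps} with Lemma~\ref{constructionoftamemaps}.
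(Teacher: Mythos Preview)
Your approach is essentially the paper's: reduce to the local statement (the paper's Lemma~\ref{submanifoldsoffrechetspace}) by passing to charts, split $F=F_0\oplus W$ using the regular-point hypothesis, and then apply Theorem~\ref{ImplicitefunctiontheoremfortameFrechetmaps}. Your straightening chart $(x,y)\mapsto(x,y-\psi(x))$ is a harmless variant of the paper's $(x,y)\mapsto(x,\varphi(x,y))$; both are smooth tame with smooth tame inverse and flatten the level set onto $F_0$.

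Where you go further than the paper is in your final paragraph. You are right that Definition~\ref{TamesubmanifoldofBanachtype} asks for a \emph{single} ambient splitting $F=B\times F_0$ valid for all submanifold charts, whereas the local construction produces $F_0(p)=\ker D\varphi(p)$ depending on $p$. The paper's own proof simply does not address this point: it works at one $a\in\varphi^{-1}(0)$ and declares the chart there to satisfy the definition. Your proposed repair---post-composing with a tame linear automorphism carrying $F_0(p)$ onto a fixed $F_0$---is the natural one, and your graph argument does give such an automorphism for $p$ near the reference point $p_0$; to globalize you would still need a connectedness argument (or to weaken the definition to allow the splitting to vary over connected components). So on this issue you have identified a genuine gap that is present in the paper as well, and your sketch is at least as complete as what the paper offers.
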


As $N_F$ and $N_B$ are tame Fr\'echet manifolds (see example~\ref{example:BanachmanifoldistameFrechet}), using appropriate charts, this result follows from the following lemma:

\begin{lemma}
\label{submanifoldsoffrechetspace}
Let $F$ be a tame Fr\'echet space and $B$ a Banach space. Let furthermore $\varphi: F \rightarrow B$ be a tame map and assume $0\in B$ to be a regular value for $\varphi$.
Then $\varphi^{-1}(0)$ is a tame Fr\'echet submanifold of co-Banach type. 
\end{lemma}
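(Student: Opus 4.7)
The plan is to apply the implicit function theorem (Theorem~\ref{ImplicitefunctiontheoremfortameFrechetmaps}) at each point $p \in \varphi^{-1}(0)$ and use the resulting smooth tame implicit function to construct a chart that straightens $\varphi^{-1}(0)$ into an affine subspace of $F$, as required by Definition~\ref{TamesubmanifoldofBanachtype}.

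First I would fix a point $p \in \varphi^{-1}(0)$. Since $p$ is a tame Fr\'echet regular point, the kernel $F_0 := \ker(D\varphi(p))$ is a complemented subspace of $F$, and its complement is (tamely identified with) a Banach space $B'$ on which $D\varphi(p)$ restricts to an element of $GL(B)$; without loss of generality I take $B' = B$. By Lemma~\ref{constructionoftamespaces}(1), $F_0$ is itself a tame Fr\'echet space, and by part (2) of the same lemma $F \cong F_0 \times B$ as tame Fr\'echet spaces. After translating so that $p$ corresponds to $(0,0)$, I may regard $\varphi$ as a smooth tame map from an open neighborhood $U \subset F_0 \times B$ of the origin into $B$, satisfying $\varphi(0,0)=0$ and $\tfrac{\partial}{\partial y}\varphi(0,0) \in GL(B)$.

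Next I would apply Theorem~\ref{ImplicitefunctiontheoremfortameFrechetmaps} to obtain open neighborhoods $U' \subset F_0$ and $U'' \subset B$ of the origin together with a smooth tame map $\psi : U' \to U''$ such that for $(x,y) \in U' \times U''$ one has $\varphi(x,y)=0$ if and only if $y = \psi(x)$. I would then define a straightening chart on the neighborhood $U(p) := U' \times U''$ of $p$ by
$$\chi_p(x,y) := (x,\, y - \psi(x)).$$
Its inverse $(x,z) \mapsto (x, z + \psi(x))$ is smooth tame by Lemma~\ref{constructionoftamemaps} (tame maps are preserved by composition and finite sums), so $\chi_p$ is a smooth tame diffeomorphism onto its image $V(p)$. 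Composed with the ambient tame Fr\'echet chart of $F$ around $p$, this yields a tame Fr\'echet chart in which
$$\varphi^{-1}(0)\cap U(p) = \{(x,\psi(x)) : x \in U'\}$$
is carried onto $\{(x,0) : x \in U'\} = F_0 \cap V(p)$. Since $p$ was arbitrary, this is exactly the condition of Definition~\ref{TamesubmanifoldofBanachtype} with Banach complement $B$.

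I expect the main obstacle to be the very first step: verifying that the regular-point hypothesis really supplies a \emph{tame} direct-sum decomposition $F \cong F_0 \times B$ (with $F_0$ a tame Fr\'echet space and $B$ Banach), so that the hypotheses of Theorem~\ref{ImplicitefunctiontheoremfortameFrechetmaps} are genuinely met. If ``complemented'' in the definition of a tame Fr\'echet regular point is read as ``complemented in the tame category'' (which is the natural reading in the framework of this paper), this step is immediate from Lemma~\ref{constructionoftamespaces}; otherwise one would need an additional argument, e.g.\ producing the tame projection from $D\varphi(p)$ together with its Banach-valued inverse on the complement. Everything downstream of this splitting is routine packaging of the smooth tame implicit function $\psi$ into the straightening chart $\chi_p$.
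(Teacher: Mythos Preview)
Your proposal is correct and follows essentially the same route as the paper: split $F=F_0\times B$ using the regular-point hypothesis, then invoke Theorem~\ref{ImplicitefunctiontheoremfortameFrechetmaps} at each $p\in\varphi^{-1}(0)$ to produce a straightening chart. The only cosmetic difference is that the paper takes the map $\Phi(x,y)=(x,\varphi(x,y))$ itself (shown to be a local tame diffeomorphism in the proof of Theorem~\ref{ImplicitefunctiontheoremfortameFrechetmaps} via Nash--Moser) as the chart, whereas you extract the implicit function $\psi$ and use the graph-straightening map $\chi_p(x,y)=(x,y-\psi(x))$; these are equivalent packagings, and your caveat about the tameness of the splitting is a fair observation that the paper leaves implicit as well.
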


\begin{corollary}
\label{submanifoldsoffrechetspace_cofinite}
Let $F$ be a tame Fr\'echet space. Let furthermore $\varphi: F \rightarrow \mathbb{R}^n$ for some $n\in \mathbb{N}$ be a tame map and assume $g\in B$ to be a regular value for $\varphi$. Then $\varphi^{-1}(g)$ is a tame Fr\'echet submanifold of co-finite type. 
\end{corollary}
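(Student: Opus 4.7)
The plan is to deduce the corollary directly from Lemma~\ref{submanifoldsoffrechetspace} by specializing the Banach space to $B=\mathbb{R}^n$ and reducing to the case $g=0$ via a translation.

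To reduce to the situation handled by the lemma, I would pass to the shifted map $\tilde\varphi(x):=\varphi(x)-g$. This map is smooth and tame: the constant map $x\mapsto -g$ into a Banach space is trivially $(0,0,\|g\|)$\ndash tame, and a sum of tame maps into the finite-dimensional target $\mathbb{R}^n$ remains tame (this is a direct estimate, and is also covered by lemma~\ref{constructionoftamemaps} once one writes the sum componentwise). Since differentiation kills additive constants, $D\tilde\varphi(p)=D\varphi(p)$ at every $p$, so both conditions in the definition of a tame Fr\'echet regular point (membership of the differential in the appropriate general linear group and complementedness of its kernel in $T_pF\cong F$) hold for $\tilde\varphi$ at $p\in\tilde\varphi^{-1}(0)$ exactly when they hold for $\varphi$ at $p\in\varphi^{-1}(g)$. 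Hence $0$ is a tame Fr\'echet regular value of $\tilde\varphi$, and $\tilde\varphi^{-1}(0)=\varphi^{-1}(g)$.

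Applying lemma~\ref{submanifoldsoffrechetspace} to $\tilde\varphi$ with Banach space $B:=\mathbb{R}^n$ then yields a tame Fr\'echet submanifold structure of co-Banach type on $\varphi^{-1}(g)$, in which the distinguished complemented Banach space appearing in Definition~\ref{TamesubmanifoldofBanachtype} is precisely $\mathbb{R}^n$. By Definition~\ref{Tamesubmanifoldoffinitetype} this upgrades the conclusion to co-finite type, finishing the proof. I do not expect any serious obstacle: the argument is bookkeeping on top of the lemma, and the only step requiring even a moment's care is verifying that the translation preserves tameness and leaves the regular-value data intact.
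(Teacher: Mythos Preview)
Your proposal is correct and matches the paper's (implicit) approach: the paper states the corollary immediately after Lemma~\ref{submanifoldsoffrechetspace} and gives no separate proof, treating it as the obvious specialization $B=\mathbb{R}^n$; you have simply spelled out the translation $\tilde\varphi=\varphi-g$ that reduces a general regular value to $0$. One very small remark: the complemented Banach subspace produced by the lemma is the complement $\overline{B}$ of $\ker(D\varphi(p))$ in $F$, which is isomorphic to $\mathbb{R}^n$ rather than literally equal to it, but this of course suffices for Definition~\ref{Tamesubmanifoldoffinitetype}.
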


\begin{proof}
The proof of lemma~\ref{submanifoldsoffrechetspace} follows the finite dimensional blueprint:
\begin{enumerate} 
\item  {\bf Prepare use of theorem~\ref{ImplicitefunctiontheoremfortameFrechetmaps}} 
Let $a\in \varphi^{-1}(0)$. By assumption $0$ is a regular value. Hence $a$ is a regular point. Thus $F_0:=\textrm{kern}(d\varphi)_p$ is a complemented subspace of $F$. Hence we have a decomposition $F=F_0\times \overline{B}$. Furthermore $d\varphi_p\in GL(B)$. In consequence the map
$$\Phi: F_0\times \overline{B}\longrightarrow F_0\times B, (x,y)\mapsto (x, \varphi(x,y))$$
satisfies the assumptions of theorem~\ref{ImplicitefunctiontheoremfortameFrechetmaps}

\item {Apply the implicit function theorem}
Hence there are open subsets  $U=U'\times U''\subset F_0\times \overline{B}$ containing $0$ and $V=V'\times V''\in F_0\times B $ such that 
the map $\Phi: U\longrightarrow V$ is a tame Fr\'echet isomorphism. Clearly
$\Phi: U\cap {\varphi^{-1}(F_0}\longrightarrow V\cap F_0$ is an isomorphism.
Hence $\varphi^{-1}(0)$ is a tame Fr\'echet submanifold of co-Banach type as defined in~\ref{TamesubmanifoldofBanachtype}
\end{enumerate}
\end{proof}

\begin{example}[spheres in Fr\'echet spaces]
Let $F$ be a tame Fr\'echet space which is the inverse limit of Hilbert spaces. Hence all norms are deduced from metrics The $n$\ndash unit spheres $S_{F}^{n}:=\{x\in F| \|x\|_n=1 \}$ in $F$ are tame Fr\'echet submanifolds of co-finite type.  The same is true for intersections of finitely many $n$\ndash unit spheres.
\end{example}

In definition~\ref{isoparametric-pf-manifold} we recalled the notion of a PF-isoparametric submanifold in a Hilbert space. Let us extend this concept to certain Fr\'echet spaces as follows: 

\begin{definition}[isoparametric submanifold]
Let $F$ be a tame Fr\'echet space such that least one seminorm $\|\phantom{z}\|_k$ is induced by a metric $\langle\phantom{z},\phantom{z}\rangle_k$. Then we can defined the Hilbert space $H_k$ as the completion of $F$ with respect to the metric $\langle\phantom{z},\phantom{z}\rangle_k$. Then by definition  $F\subset H_k$. Let $M\subset H_k$ be a proper Fredholm isoparametric submanifold.  Then $M_F:=M\cap F\subset F$ is called a tame Fr\'echet isoparametric submanifold in $F$. 
\end{definition}

\begin{lemma}
$M_F$ is a tame Fr\'echet submanifold of co-finite type. 
\end{lemma}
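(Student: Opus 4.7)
The plan is to apply Corollary~\ref{submanifoldsoffrechetspace_cofinite} locally around each point of $M_F$, so that it suffices to exhibit, near any $p\in M_F$, a smooth tame map $\psi_p:V_p\subset F\to\mathbb{R}^n$ having $0$ as a tame Fr\'echet regular value with $M_F\cap V_p=\psi_p^{-1}(0)$. Fix $p\in M_F\subset F\subset H_k$. Since $M\subset H_k$ is proper Fredholm with finite codimension $n$, the implicit function theorem in the Hilbert setting furnishes an open neighborhood $U_p\subset H_k$ of $p$ and a smooth submersion $\Pi_p:U_p\to\mathbb{R}^n$ whose zero set is $M\cap U_p$ and each of whose zeros is a regular point. (Concretely, one can take $\Pi_p$ to be orthogonal projection onto the normal space $\nu_pM\cong\mathbb{R}^n$ composed with the retraction coming from the tubular neighborhood, which is smooth because the normal bundle is finite dimensional.)

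The pullback to $F$ is defined by $V_p:=U_p\cap F$ and $\psi_p:=\Pi_p|_{V_p}:V_p\to\mathbb{R}^n$; the set $V_p$ is open in $F$ since the inclusion $F\hookrightarrow H_k$ is continuous with respect to $\|\phantom{z}\|_k$. The analytic heart of the argument is to show that $\psi_p$ is a smooth tame map. Smoothness follows from smoothness of $\Pi_p$ combined with continuity of the inclusion. For tameness one uses that any smooth map from a Hilbert space to $\mathbb{R}^n$ is locally Lipschitz, and that the Hilbert norm $\|\phantom{z}\|_k$ is dominated (by the grading inequality) by every $\|\phantom{z}\|_n$ with $n\geq k$; after shrinking $V_p$ so that it lies in a bounded subset, this yields an estimate $\|\psi_p(f)\|\leq C(1+\|f\|_n)$ for $n\geq k$, i.e.\ $(0,k,C)$\ndash tameness.

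Next I would verify that $0$ is a tame Fr\'echet regular value of $\psi_p$. At any $q\in\psi_p^{-1}(0)$ the differential $D\Pi_p(q):H_k\to\mathbb{R}^n$ is surjective; since $F$ is dense in $H_k$ and the target is finite dimensional, the restricted differential $D\psi_p(q):F\to\mathbb{R}^n$ is still surjective, which gives the $GL(\mathbb{R}^n)$ condition of Definition~\emph{tame Fr\'echet regular point} once a right inverse into $F$ is picked. The kernel $\ker D\psi_p(q)$ has finite codimension $n$ in $F$, so choosing vectors $v_1,\dots,v_n\in F$ whose images form a basis of $\mathbb{R}^n$ gives a finite-dimensional Banach complement, settling the complementation requirement. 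Corollary~\ref{submanifoldsoffrechetspace_cofinite} then identifies $\psi_p^{-1}(0)=M_F\cap V_p$ as a tame Fr\'echet submanifold of co-finite type of $V_p$; because the defining property in Definition~\ref{Tamesubmanifoldoffinitetype} is local, $M_F$ itself is a tame Fr\'echet submanifold of co-finite type of $F$.

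The main obstacle I anticipate is the tameness estimate for $\psi_p$: merely knowing that $\Pi_p$ is smooth in the Hilbert norm is not, a priori, enough to produce a graded estimate on $F$, and the argument relies essentially on the compatibility of $\|\phantom{z}\|_k$ with the grading of $F$ together with Lipschitz control of $\Pi_p$ on bounded sets. A secondary subtlety is that the natural choice of $\Pi_p$ (orthogonal projection onto $\nu_pM$) is defined only on a tubular neighborhood whose size is measured in $\|\phantom{z}\|_k$, so one must confirm that restricting to such a neighborhood still yields an $F$\ndash open set on which the estimates hold uniformly.
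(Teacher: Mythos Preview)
Your approach differs from the paper's. The paper gives a very brief, essentially direct argument: it observes that the normal bundle of $M_F$ is flat and finite-dimensional (inherited from the isoparametric PF submanifold $M\subset H_k$), picks an arbitrary tame chart $\phi_m:U(m)\to V(m)\subset F$ around each $m\in M_F$, notes that the image $\phi_m(U(m)\cap M_F)$ has finite codimension $n$, and then simply asserts the existence of a modified chart $\widetilde{\phi}$ straightening $M_F$ onto $F_0\cap\widetilde{V}(m)$ for some splitting $F=F_0\times\mathbb{R}^n$. No appeal is made to the implicit function theorem or to Corollary~\ref{submanifoldsoffrechetspace_cofinite}.

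You instead build a local defining map $\psi_p:V_p\to\mathbb{R}^n$ by restricting a Hilbert-space submersion $\Pi_p$ to $F$, verify smooth tameness of $\psi_p$ via Lipschitz control in the norm $\|\phantom{z}\|_k$ together with the grading inequality, check that $0$ is a tame regular value, and invoke Corollary~\ref{submanifoldsoffrechetspace_cofinite}. This is more work but also more explicit: the paper's passage ``we thus can find \dots\ a chart $\widetilde{\phi}$'' glosses over precisely the straightening step that your machinery supplies. Your route also makes transparent why the ambient Hilbert structure matters---it furnishes the tubular neighborhood and the Lipschitz estimate controlling tameness---whereas the paper's sketch leaves the origin of the tame submanifold charts opaque. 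The obstacles you flag are genuine but resolvable along the lines you indicate: continuity of $F\hookrightarrow H_k$ guarantees that $H_k$-open sets pull back to $F$-open ones, and local boundedness of $\Pi_p$ (and of its derivatives) in $\|\phantom{z}\|_k$ yields the graded estimate needed for smooth tameness into a finite-dimensional target.
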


\begin{proof}
Remark that by construction the normal bundle is flat and finite dimensional. More precisely, it has the same dimension $n$ as the normal bundle of the isoparametric hypersurface $H\subset H_k$. For any point $m\in M_F$ there exists an open neighborhood $U(m)$ and a chart $\phi_m: U(m)\rightarrow V(m)\subset F$. $\phi(U(m)\cap M_F)\subset V(m)$ has finite codimension $n$. We thus can find a complementary subspace $F_0\in F$ such that $F=F_0\times \mathbb{R}^n$ and a chart $\widetilde{\phi}: U(m)\rightarrow \widetilde{V}(m)$ such that $\phi(U(m)\cap M_F)=F_0\cap \widetilde{V}(m)$. 
\end{proof}

\def\cprime{$'$} \def\cprime{$'$}
\providecommand{\bysame}{\leavevmode\hbox to3em{\hrulefill}\thinspace}
\providecommand{\MR}{\relax\ifhmode\unskip\space\fi MR }
\providecommand{\MRhref}[2]{%
  \href{http://www.ams.org/mathscinet-getitem?mr=#1}{#2}
}
\providecommand{\href}[2]{#2}

\end{document}